\newcommand{\qed}{\hfill \rule{2.5mm}{2.5mm}}
\newcommand{\R}{{\mathbb R}}
\newcommand{\C}{{\mathbb C}}
\newcommand{\tvect}[2]{\ensuremath{\negthinspace\begin{pmatrix}#1 \\ #2 \end{pmatrix}}}
\newcommand{\N}{{\mathbb N}}
\newcommand{\Z}{{\mathbb Z}}
\newcommand{\sgn}{{\rm sgn}\,}
\renewcommand{\tvect}[2]{\left(\begin{array}{cc}{ #1}\\ { #2}\end{array}\right)}
\begin{document}
\newtheorem{thm}{Theorem}[section]
\newtheorem{defs}[thm]{Definition}
\newtheorem{lem}[thm]{Lemma}
\newtheorem{example}[thm]{Example}
\newtheorem{cor}[thm]{Corollary}
\newtheorem{prop}[thm]{Proposition}
\renewcommand{\theequation}{\arabic{section}.\arabic{equation}}
\newcommand{\newsection}[1]{\setcounter{equation}{0} \section{#1}}
\title{Borg's Periodicity Theorems\\ for first order self-adjoint systems\\ with complex potentials
      \footnote{{\bf Keywords:} Dirac system, inverse problems, spectral theory.\
      {\em MSC(2010):} 34A55, 34L40, 34B05.}}
\author{
Sonja Currie\footnote{ Supported by NRF grant number IFR2011040100017}, Thomas T. Roth,
Bruce A. Watson \footnote{Supported by the Centre for Applicable Analysis and
Number Theory, the DST-NRF Centre of Excellence in Mathematical and Statistical Sciences 
and by NRF grant number IFR2011032400120.} \\
School of Mathematics\\
University of the Witwatersrand\\
Private Bag 3, P O WITS 2050, South Africa }
\maketitle
\abstract{
A self-adjoint first order system with Hermitian $\pi$-periodic potential $Q(z)$, integrable on compact sets, is considered. 
 It is shown that all zeros of $\Delta + 2e^{-i\int_0^\pi \Im q dt}$ are double zeros if and only
if this self-adjoint system is unitarily equivalent to one in which $Q(z)$ is $\frac{\pi}{2}$-periodic. Furthermore, the zeros 
of $\Delta - 2e^{-i\int_0^\pi \Im q dt}$ are all double zeros
  if and only if the associated self-adjoint system is unitarily equivalent to one in which $Q(z) = \sigma_2 Q(z) \sigma_2$.
 Here $\Delta$ denotes the discriminant of the system and $\sigma_0$, $\sigma_2$ are Pauli matrices. Finally, it is shown
 that all instability intervals vanish if and only if $Q = r\sigma_0 + q\sigma_2$, for some real valued $\pi$-periodic functions $r$ and $q$ 
integrable on compact sets.
}
\parindent=0in
\parskip=.2in
\newsection{Introduction\label{sec-intro}}

Self-adjoint systems have been studied extensively in the last century, see \cite{LaS}-\cite{LS2}. Periodic problems for
 self-adjoint systems with integrable 
potentials have received consistent attention, \cite{EBS}. This is especially true recently for the Ambarzumyan and Borg uniqueness-type results,
 \cite{GKM}-\cite{fGaKkM}, \cite{mK} and \cite{cYxY}.
 It should be noted that these results pertain mainly to regular and singular inverse problems with $2n\times 2n$ potentials with matrix
 valued
 entries. These classes of problems are not as developed as inverse problems for canonical $2 \times 2$ systems, in which many inverse results pertaining to
 uniqueness have been investigated, \cite{mGtD1}-\cite{bWat}. Never the less, $2 \times 2$ self-adjoint systems are an active area of study in physics communities in
 which they are referred to as 
the Ablowitz-Kaup-Newell-Segur equation, \cite{lA}-\cite{fGKM}, \cite{bWat}, and the Zakarov-Shabat equation, \cite{pBmV}-\cite{vsGgV}. This alludes to a link
 between self-adjoint systems and completely integrable systems which is being actively investigated, \cite{nAyK}-\cite{baD}. 

The results in this work where first proved for the Sturm-Liouville eigenvalue problem by Ambarzumyan, \cite{ABar}, Borg, \cite{BG}, and later Hochstadt,
 \cite{HH1,HH2}.
In particular, in Borg's paper he proved an existence result for periodic potentials which has largely gone unstudied for the self-adjoint system. 
 Self-adjoint systems with absolutely continuous potentials are reducible to Sturm-Liouville equations.
 This is not possible in general for self-adjoint systems with potentials
integrable on compact sets. These systems present challenges that make the results
 of this work a non-trivial extension of the aforementioned works. These challenges are: \\
a) Existing asymptotics for self-adjoint systems do not allow the generality of potential considered here. Difficulties in deriving such asymptotics have been
 discussed in the remark of \cite[pp. 1464]{fS1}. \\
b) Self-adjoint systems are spectrally identical to those obtained by certain gauge transformations, thus uniqueness results are
 not possible in general. These transformations have been investigated in \cite{LaS} and \cite{fGsC3}. \\ \\
In Section \ref{asymp1}, resolution of the first term in the solution asymptotics for all values of the eigenparameter in $\C$ are established for
 Hermitian
 $Q$ integrable on compact sets. The authors are only aware of solution asymptotics on open sectors in $\C$ for canonical systems with potentials integrable on
 compact sets and systems with absolutely continuous potentials, see
 \cite[pp. 191]{LaS}, \cite{fS1}, \cite[pp. 3492]{fGsC3}. In Section \ref{Chardeter1} we introduce the $\sigma_i$-determinants.
 Lemmas \ref{CanonSystI} and \ref{CanonSystI2} establish an important relation between the $\mathbb{I}$-discriminant
 of a self-adjoint system and the behaviour of the fundamental solution at $\pi$ and $\frac{\pi}{2}$ (these are also referred to as monodromy matrices). These Lemmas
 are essential for studying the inverse problem.

The main results of this work, Theorems \ref{perthm} and \ref{antiperthm} are the self-adjoint system
 analogues to the Sturm-Liouville results
obtained in \cite{BG, HH2} and \cite{HH3}, respectively. Corollary
 \ref{BorgUnique} shows that uniqueness is possible only in the case when $Q$ is in canonical form. Finally, it is shown as a pleasant consequence, that
Borg's uniqueness result for canonical systems is derivable from the Borg Periodicity Theorems. Furthermore, the extent to which this uniqueness result 
fails for self-adjoint systems is characterised. 
This work uses ideas presented in \cite{HH2}, however, as far as the authors are aware, the results presented here are new.  

\newsection{Preliminaries\label{sec-prelim}}

Let
\begin{eqnarray}\label{DifferentialExpr}
	\ell Y := JY' + QY,
\end{eqnarray}
and consider the differential equation
\begin{equation} \label{eValProbFormal}
	\ell Y = \lambda Y
\end{equation}
where 
\begin{eqnarray}\label{QDescription}
	J = \left( \begin{array}{cc}
0 &  1\\
-1 & 0 \end{array} \right) \quad \mbox{ and } \quad Q = \left( \begin{array}{cc}q_1 &  q\\q^* & q_2 \end{array} \right),
\end{eqnarray}
$q$ is complex valued, $q_1$ and $q_2$ are real and $Q$ is $\pi$-periodic and integrable on $[0,\pi)$.
Let $Y_i = \tvect{y_{i1}(z)}{y_{i2}(z)}, i=1,2,$ be solutions of (\ref{eValProbFormal}) with initial values 
given by
\begin{eqnarray}\label{initialConds}
	 [ Y_1(0) \mbox{ } Y_2(0) ] = \mathbb{I},
\end{eqnarray}
where $\mathbb{I}$ is the $2\times 2$ identity matrix.  Set $\mathbb{Y}=[Y_1 \mbox{ } Y_2]$. We recall that the Pauli matrices are given by
 $\sigma_0=\mathbb{I}$, 
\begin{equation}
	\sigma_1 =	\left( \begin{array}{cc}
	0 &  1\\
	1 & 0 \end{array} \right), \quad
\sigma_2 =	\left( \begin{array}{cc}
0 &  i\\
-i & 0 \end{array} \right), \quad
\sigma_3 =	\left( \begin{array}{cc}
1 &  0\\
0 & -1 \end{array} \right).
\end{equation}
Here $\sigma_2 = i J$. The set of Pauli matrices form a basis for $M_2(\C)$, the $2\times 2$ matrices over $\C$,
and 
\begin{eqnarray*}
	\sigma_i\sigma_j = i\epsilon_{ijk}\sigma_k + \delta_{ij}\mathbb{I}, \quad \mbox{ for } \quad i,j = 1,2,3, k \neq i,j,
\end{eqnarray*}
where $\epsilon_{ijk}$ and $\delta_{ij}$ are the Levi-Civita permutation and Kronecker delta symbols, respectively. Note that $\epsilon_{ijk}$ is 
$1$ if the number of permutations of $(i,j,k)$ into $(1,2,3)$ is even, $-1$ if the number of permutations of $(i,j,k)$ into $(1,2,3)$ is odd, and 
zero if any of the indices are repeated.
 Furthermore the $2\times 2$ matrices over $\C, M_2(\C),$
form an inner product space with inner product defined by
\begin{equation}
	\langle H, F \rangle_{Lin} = Tr\{ H^T\overline{F}\}, \quad \mbox{ for }H,F.
\end{equation}
 For any $H =\sum_{i=0}^3a_i\sigma_i \in M_2(\C)$, the determinant is 
given by
\begin{equation}\label{detDefn}
\mbox{det}({H}) = a_0^2 - a_1^2 - a_2^2 - a_3^2.	
\end{equation} 
Define the {\it $\sigma_i$-symmetric} and {\it $\sigma_i$-skewsymmetric} subspaces $S_+^{\sigma_i}$ and $S_-^{\sigma_i}$ of
 $M_2(\C)$
 as
\begin{equation}
	S_+^{\sigma_i} = \{ x \in GL(2,\C) : x\sigma_i = \sigma_ix \} \quad \mbox{and} \quad S_-^{\sigma_i} = \{ x \in M_2(\C) :
	 x\sigma_i = -\sigma_i
	x \}.
\end{equation}
We have the product space $M_2(\C) = S_-^{\sigma_i} \oplus S_+^{\sigma_i}$.

The {\it J-decomposition} of $Q$ in $M_2(\C) = S_-^{J} \oplus S_+^{J}$ is
\begin{equation}
	Q = Q_1 + Q_2,
\end{equation}
where 
\begin{equation}\label{Q1andQ2Defn}
	\quad Q_1 = \Re (q)\sigma_1 +
	 \textstyle\frac{1}{2}(q_1 -
	 q_2)\sigma_3 \quad \mbox{and} \quad Q_2 = \textstyle\frac{1}{2}(q_1 + q_2)\sigma_0 + \Im (q) \sigma_2 .
\end{equation}
We see that $Q_1$ and $Q_2$ are the projections of $Q$ onto $S_-^{J}$ and $S_+^{J}$, respectively. We note for later that $Q_2J = JQ_2$ and $JQ_2$ and 
$\int_0^zJQ_2$ commute.  A potential $Q$ is said to be in canonical form if $Q_2 = 0$, 
that is, $Q = Q_1$.

Since $\mathbb{Y}$ is a fundamental system for
 (\ref{eValProbFormal}), $\mathbb{Y}\in GL(2,\C)$. Furthermore setting
\begin{equation}\label{DiscrimDef01}
	\begin{array}{cc}
	\Delta^\mathbb{I} = y_{11}(\pi) + y_{22}(\pi), &  \nabla^\mathbb{I} = y_{11}(\pi) - y_{22}(\pi),\\
	\Delta^J = y_{21}(\pi) - y_{12}(\pi), & \nabla^J = y_{21}(\pi) + y_{12}(\pi), \end{array}
\end{equation}
$\mathbb{Y}(\pi)$ may be represented as
\begin{equation} \label{discrimRepY}
	\mathbb{Y}(\pi) = \frac{1}{2}\left( \begin{array}{cc}
	\Delta^\mathbb{I} + \nabla^\mathbb{I}  &  \Delta^J + \nabla^J\\
	\nabla^J - \Delta^J & \Delta^\mathbb{I} - \nabla^\mathbb{I} \end{array} \right).
\end{equation}
Thus expressed in terms of the Pauli basis for $GL(2,\C)$ we have
\begin{equation}\label{GLYLin}
	 \mathbb{Y}(\pi) = \frac{1}{2}(\Delta^\mathbb{I}\mathbb{I} + \Delta^JJ +
	 \nabla^\mathbb{I}\sigma_3 + \nabla^J\sigma_1).
\end{equation}
A direct computation using (\ref{detDefn}) and (\ref{GLYLin}) with $\mbox{det}(\mathbb{Y}) = 1$ gives
\begin{eqnarray}\label{WronGL}
	(\Delta^\mathbb{I})^2 + (\Delta^J)^2 - (\nabla^\mathbb{I})^2 - (\nabla^J)^2 = 4.
\end{eqnarray}
Similar relations to (\ref{DiscrimDef01})-(\ref{WronGL}) for $\mathbb{Y}(\frac{\pi}{2})$ and $\mathbb{Y}(-\frac{\pi}{2})$
may be obtained, the symbols contained in these relations are denoted by the subscript $+$ and $-$, respectively. 
Let $\mathbb{H} = \mathcal{L}_2(0,\pi)\times\mathcal{L}_2(0,\pi)$ be the Hilbert space with inner product
\begin{eqnarray*}
\langle Y,Z\rangle = \int_0^\pi Y(t)^T \overline{Z}(t) dt \quad \mbox{ for } Y,Z \in \mathbb{H},
\end{eqnarray*}
and norm $\| Y \|^2_2 := \langle Y,Y \rangle$. The Wronskian of $Y,Z\in\mathbb{H}$ is $ \mbox{Wron}[Y,Z] = Y^TJZ$.
We consider the following operator eigenvalue problems
\begin{eqnarray} \label{eValProb}
	L_iY = \lambda Y, \qquad  \qquad i = 1,...,4,
\end{eqnarray}
where $L_i = \ell|_{\mathcal{D}(L_i)}$ with
\begin{eqnarray}
   \mathcal{D}(L_i)=\left\{ Y=\tvect{y_1}{y_2} \,:\,y_1,y_2 \in \mbox{AC}, \ell Y \in\mathbb{H},\mbox{Y obeys } (BC_i) \right\}.
\end{eqnarray}
Here conditions $(BC_i)$ are  \\
\begin{eqnarray}
	Y(0) =&Y(\pi),\qquad   &(BC_1), \label{per}\\  
	Y(0) =&-Y(\pi),\qquad   &(BC_2), \label{antper}\\ 
	y_1(0) =&y_1(\pi)=0,\qquad  &(BC_3), \label{1Dir}\\
	y_2(0) =&y_2(\pi)=0,\qquad &(BC_4). \label{2Dir} 
\end{eqnarray}

\newsection{Solution Asymptotics}\label{asymp1}
We now give an asymptotic approximation for $\mathbb{Y}$ in the case of $|\lambda|$ large. 
We will make use of the following operator matrix norm $$|[c_{ij}]|=\max_{j}{\sum_i |c_{ij}|}.$$

\begin{lem}\label{LemAsymptotics}
	Let $Q = Q_1 + Q_2$ (as in (\ref{Q1andQ2Defn})) be complex valued and integrable on $[0,\pi)$.
	 The matrix solutions $\mathbb{Y}$ and $\mathbb{U}$ of $\ell Y = \lambda Y$ satisfying the 
	 conditions, $\mathbb{Y}(0) = \mathbb{I}=\mathbb{U}(\pi)$, are of order $1$. For $z \in \R$ and $\lambda = re^{i\theta}$ with $r \rightarrow \infty$, we 
	have uniformly in $\theta$ and $z$, that
	\begin{equation}\label{Asymptot1.1}
		\mathbb{Y}(z) = e^{-J\lambda z}e^{J\int_0^z Q_2 dt} + o(e^{|\Im \lambda z|}),
	\end{equation}
	\begin{equation}\label{Asymptot1.2}
		\mathbb{U}(z) = e^{-J\lambda (\pi - z)}e^{J\int_0^{(\pi - z)} Q_2 dt} + o(e^{|\Im \lambda(\pi - z)|}).
	\end{equation}
\end{lem}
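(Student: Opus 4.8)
The plan is to reduce everything to a constant-coefficient system by peeling off the commuting part $Q_2$, and then to invoke the classical variation-of-parameters estimate. First I would set $V(z) = e^{-J\int_0^z Q_2\,dt}\,\mathbb{Y}(z)$. Since $Q_2 J = J Q_2$ and $J\int_0^z Q_2$ commutes with $JQ_2(z)$ (both noted in the preliminaries), a direct differentiation gives $JV' = (\lambda\mathbb{I} - \widetilde Q_1(z))V$ where $\widetilde Q_1(z) = e^{-J\int_0^z Q_2}\,Q_1(z)\,e^{J\int_0^z Q_2}$ is again integrable on $[0,\pi)$; the point is that the $Q_2$-term has been absorbed, and $\widetilde Q_1$ is a genuine, $\lambda$-independent, integrable perturbation. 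Equivalently $V' = -J\widetilde Q_1 V + \lambda(-J)V$, i.e. $V' = (-\lambda J + R(z))V$ with $R = -J\widetilde Q_1 \in L^1(0,\pi)$ and $V(0) = \mathbb{I}$.

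Next I would write the integral equation $V(z) = e^{-\lambda J z} + \int_0^z e^{-\lambda J(z-t)} R(t) V(t)\,dt$ and estimate with the matrix norm $|\cdot|$ introduced just before the lemma. Because $J$ is skew-Hermitian with $J^2 = -\mathbb{I}$, one has $e^{-\lambda J z} = \cos(\lambda z)\mathbb{I} - \sin(\lambda z) J$, so $|e^{-\lambda J z}| \le C e^{|\Im\lambda|\,|z|}$ with $C$ absolute and, crucially, $|e^{-\lambda J(z-t)}| \le C e^{|\Im\lambda|(z-t)}$ for $0\le t\le z$. Setting $W(z) = e^{-|\Im\lambda| z}|V(z)|$, Gronwall's inequality yields $W(z) \le C \exp\!\big(C\int_0^z |R(t)|\,dt\big) \le C'$ uniformly in $\lambda$ and $z\in[0,\pi]$; this is the order-$1$ claim, and the same bound applied on $[0,z]$ gives $|\mathbb{Y}(z)| = |e^{J\int_0^z Q_2}V(z)| = O(e^{|\Im\lambda z|})$. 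To upgrade $O$ to the asserted expansion, I iterate once: $V(z) - e^{-\lambda J z} = \int_0^z e^{-\lambda J(z-t)} R(t)\big(e^{-\lambda J t} + (V(t)-e^{-\lambda J t})\big)\,dt$. The second piece is $O\big(e^{|\Im\lambda|z}\int_0^z|R||V(t)-e^{-\lambda Jt}|\big)$ and is handled by Gronwall again once the first piece is shown to be $o(e^{|\Im\lambda|z})$; the first piece is $e^{-\lambda J z}\int_0^z e^{2\lambda J t}(\text{off-diagonal-type terms of }R(t))\,dt + e^{-\lambda Jz}\int_0^z(\text{commuting part})$, and since $R = -J\widetilde Q_1$ with $\widetilde Q_1 \in S_-^J$ anticommuting with $J$, every term carries an oscillatory factor $e^{\pm 2\lambda t}$ relative to the envelope. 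The Riemann–Lebesgue lemma (valid for $L^1$ functions, uniformly as $|\lambda|\to\infty$ along any ray, because the phase $e^{2i\Re\lambda\, t}e^{-2\Im\lambda\, t}$ against an $L^1$ kernel tends to $0$ after extracting the envelope $e^{-2|\Im\lambda|t}\le 1$) gives that this is $o(e^{|\Im\lambda|z})$ uniformly in $\theta$. Multiplying back by $e^{J\int_0^z Q_2}$, whose norm is bounded independently of $z$, gives (\ref{Asymptot1.1}). The estimate (\ref{Asymptot1.2}) for $\mathbb{U}$ follows by the substitution $z \mapsto \pi - z$, which turns the terminal condition $\mathbb{U}(\pi)=\mathbb{I}$ into an initial condition of the same type, together with the observation $\int_0^z Q_2(\pi - s)\,ds$ reassembles into $\int_0^{\pi-z}Q_2$ using $\pi$-periodicity.

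The main obstacle is making the Riemann–Lebesgue step genuinely uniform in $\theta = \arg\lambda$, i.e. simultaneously for real $\lambda$ (where $e^{2\lambda J t}$ is purely oscillatory and the envelope is trivial) and for $\lambda$ off the real axis (where one must show the oscillatory integral decays faster than the exponential envelope it is divided by, rather than merely being bounded by it). The clean way around this is to split $\int_0^z = \int_0^{z-\delta} + \int_{z-\delta}^z$: on the short interval $[z-\delta,z]$ the $L^1$-smallness of $R$ makes the contribution $\le C\int_{z-\delta}^z|R|\,e^{|\Im\lambda|z} < \varepsilon e^{|\Im\lambda|z}$, while on $[0,z-\delta]$ the factor $e^{-\Im\lambda(z-t)\cdot 2}\le e^{-2|\Im\lambda|\delta}$ (in the relevant entries) pulls the envelope down, and when $|\Im\lambda|$ is bounded one is in the genuinely oscillatory regime where Riemann–Lebesgue applies directly to the $L^1$ kernel. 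A standard $\varepsilon$–$\delta$ bookkeeping then closes the estimate uniformly in $\theta$; this is the only place real care is needed, the rest being the routine Gronwall/Volterra machinery.
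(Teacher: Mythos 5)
Your proposal follows essentially the same route as the paper's proof: gauge away $Q_2$ via $e^{J\int_0^z Q_2\,dt}$ (using $Q_2J=JQ_2$), run Gronwall on the resulting Volterra equation for the order-one bound, iterate once and exploit the anticommutation of the canonical part with $J$ to produce the factor $e^{-\lambda J(z-2t)}$, whose contribution is controlled for large $|\Im\lambda|$ by the envelope $e^{-2|\Im\lambda|\min\{t,\,z-t\}}$ and for bounded $|\Im\lambda|$ by the Riemann--Lebesgue lemma in $\Re\lambda$ --- precisely the dichotomy of (\ref{WQWasymp1})--(\ref{BlockIntEst23}) --- with the same reflection argument for $\mathbb{U}$. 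The only slips are minor: your $\delta$-splitting excises a neighbourhood of $t=z$ alone, whereas the envelope fails to be small near \emph{both} endpoints $t=0$ and $t=z$, so both must be excised (or one applies dominated convergence directly, as the paper does), and the case $z<0$ of the stated lemma still needs the reflection $\hat z=-z$, $\hat\lambda=-\lambda$.
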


\begin{proof}
	Consider the transformation $\mathbb{Y}(z) = e^{J\int_0^z Q_2 dt}\tilde\mathbb{Y}(z)$ for $z \geq 0$.
	Substituting this transformation into (\ref{DifferentialExpr}) gives 
	\begin{equation}\label{tildeDiracEquation}
		J\tilde\mathbb{Y}' + \tilde{Q}\tilde\mathbb{Y} = \lambda \tilde\mathbb{Y}
	\end{equation}
	where 
	\begin{eqnarray}
		\tilde{Q}(z)  &=& e^{-J\int_0^z Q_2 dt}Q_1(z)e^{J\int_0^z Q_2 dt}.
	\end{eqnarray}
		
	Notice that $\tilde{Q}$ is a real canonical 
	matrix. Let $\tau := \Im\lambda$ and $\rho := \Re\lambda$.  Using variation of parameters, \cite[pp. 74]{CodaLev}, $\tilde\mathbb{Y}$ obeys
		 the integral equation
	\begin{equation}
		\tilde\mathbb{Y}(z) = e^{-\lambda J z} + 
		\int_0^z e^{-\lambda J(z - t)}J\tilde{Q}(t) \tilde\mathbb{Y}(t) dt.
	\end{equation}
	Setting $\tilde\mathbb{Y}(z) = e^{ \tau z}\mathbb{V}(z)$ we have
	\begin{equation}\label{VIntEqn}
		\mathbb{V}(z) = e^{-(\lambda J  + \mathbb{I}\tau )z} + \int_0^z e^{-(\lambda J + \mathbb{I}\tau)
		 (z-t)}J\tilde{Q}(t)\mathbb{V}(t)dt,
	\end{equation}
	giving
	\begin{equation}
		|\mathbb{V}(z)| \leq 1 + \int_0^z|\tilde{Q}||\mathbb{V}|dt.
	\end{equation}
	Using Gronwall's inequality, \cite[Lemma 6.3.6]{hormander}, we have the estimate $\mathbb{V} = O(1)$, thus  $\tilde\mathbb{Y} =
	 O(e^{\tau z})$.
Set $W(z) := e^{-(J\lambda + \mathbb{I}\tau)z}$. Substituting (\ref{VIntEqn}) back into itself gives
\begin{equation}\label{BlockIntEst2}
	\mathbb{V}(z) = W(z) + \int_0^z JW(z-t)\tilde{Q}(t)W(t)dt + 
	\int_0^z\int_0^t W(z-t)\tilde{Q}(t)W(t-s)\tilde{Q}(s) \mathbb{V}(s)ds dt,
\end{equation}
since $\tilde{Q}J = -J\tilde{Q}$. For $x,y \in \R$, $z \geq 0$, we have
\begin{eqnarray}
	W(x)\tilde{Q}(z)W(y)   &=& e^{-(\lambda J + \mathbb{I}|\tau|)x}e^{(\lambda J - \mathbb{I}|\tau|)y}\tilde{Q}(z),\\
						&=& e^{-\rho J(x-y)}e^{-|\tau|(x+y)}e^{-i\tau J(x-y)}\tilde{Q}(z),\label{WQWasymp2}
\end{eqnarray}
Furthermore, setting $f(x,y) := e^{-|\tau|(x+y)}e^{-i\tau J(x-y)}$ we have
\begin{equation}\label{BlockIntEst232}
   f(x,y) =\frac{1}{2}\mathbb{I}(e^{-2|\tau| x} + e^{-2|\tau| y}) + \frac{\sgn\tau}{2i}J
	(e^{-2|\tau| x}- e^{-2|\tau| y}),
\end{equation}
thus combining (\ref{WQWasymp2}) and (\ref{BlockIntEst232}) gives
\begin{equation}
	W(x)\tilde{Q}W(y) = O(|\tilde{Q}|e^{-2|\tau| \min \{x,y \} }).  \label{WQWasymp1}
\end{equation}
 From (\ref{WQWasymp1}) we have the following bound  
\begin{equation}\label{bound1LDCT}
	|W(z-t)\tilde{Q}(t)W(t)| \leq k|\tilde{Q}(t)|e^{-2\min\{z-t, t\}}, 
\end{equation}
 for some $k> 0$, independent of $\lambda$, $x$ and $y$. Using (\ref{bound1LDCT}), the Lebesgue dominated convergence theorem shows that 
\begin{equation}
	\int_0^z W(z-t)\tilde{Q}(t)W(t) dt = O\left( \int_0^z |\tilde{Q}(t)|e^{-2|\tau|\min\{z-t,t\})} dt \right),
\end{equation}
tends to zero as $|\tau|$ tends to infinity. While for $|\tau| = c < c'$, using (\ref{WQWasymp2}), we have that the second term on the right hand side of 
(\ref{BlockIntEst2}) is equal to
\begin{equation}\label{BlockIntEst23}
	\int_0^z (\mathbb{I}\cos\sigma(z-2t) -J\sin\sigma(z-2t))f(z-t,t)\tilde{Q}(t)dt,
\end{equation}
where $f(z-t,t)\tilde{Q}(t)$ is integrable on $[0,\pi]$. Thus by the Riemann-Lebesque Lemma, (\ref{BlockIntEst23}) tends to zero as $|\rho|$ tends to infinity.
Hence the second term on the right hand side of (\ref{BlockIntEst2}) tends to zero uniformly in $\arg(\lambda)$ as $|\lambda|$ tends to infinity. The uniformity
here follows from the uniformity of this limit as $|\tau|$ tends to infinity, thus this limit holds as $|\sigma|$ tends to infinity for fixed $c$.

By changing the order of integration, the double integral in (\ref{BlockIntEst2}) is equal to
\begin{equation}\label{3rdTermRHS}
	\int_0^z\left(\int_\tau^z W(z- t)\tilde{Q}(t)W(t - \tau) dt \right)\tilde{Q}(\tau)\mathbb{V}(\tau)d\tau.
\end{equation}
From the reasoning above, the inner integral in (\ref{3rdTermRHS})
tends to zero as $|\lambda|$ tends to infinity, thus, as $\mathbb{V}$ is bounded, so does the double integral. 
 So from (\ref{BlockIntEst2}) for large $|\lambda|$,
		\begin{equation}\label{VLittleOEst}
			\mathbb{V}(z) = e^{-(\lambda J  + \mathbb{I}|\tau|)z} + o(1).
		\end{equation}   
		Substituting (\ref{VLittleOEst}) back into the expression for $\tilde\mathbb{Y}$ gives
	\begin{equation}\label{PositiveAsymptot1}
		\mathbb{Y}(z) = e^{-J\lambda z}e^{J\int_0^z Q_2 dt} + o(e^{|\tau|z}) \quad \mbox{ for }z \geq 0.
	\end{equation}
	 Assuming that $z \leq 0$, we may apply the transformation $\hat{z} = -z$, $\hat{Y}(\hat{z})  = Y(z)$,
	$\hat{Q}(\hat{z}) = -Q(z)$	and $\hat\lambda = -\lambda$	to transform $\ell Y = \lambda Y$ into 
	\begin{equation}
		J\hat{Y}'(\hat{z}) + \hat{Q}(\hat{z})\hat{Y}(\hat{z}) = \lambda\hat{Y}(\hat{z}).
	\end{equation}
	From the above work $\hat\mathbb{Y}(\hat{z})$ is given by
	\begin{equation}
		\hat\mathbb{Y}(\hat{z}) = e^{-J\hat\lambda \hat{z}}e^{J\int_0^{\hat{z}} \hat{Q}_2 dt} + o(e^{|\tau|\hat{z}}).
	\end{equation}
	Thus substituting the transformations above we have
	\begin{equation}\label{NegativeAsymptot1}
		\mathbb{Y}(z) = e^{-J\lambda z}e^{J\int_0^{z} Q_2 dt} + o(e^{-|\tau|z}) \quad \mbox{ for }z \leq 0.
	\end{equation}
	Combining (\ref{PositiveAsymptot1}) and (\ref{NegativeAsymptot1}) gives (\ref{Asymptot1.1}).
	To obtain (\ref{Asymptot1.2}), set $\check\mathbb{U}(\check{x}) := \mathbb{U}(\pi - x)$ where $\check{x} = \pi - x$. Thus
	 $\check\mathbb{U}$
	 with 
	$\check\mathbb{U}(0) = \mathbb{I}$ is a solution to 
	$\ell Y = \lambda Y$ with potential $\check{Q}(\check{x}) := -Q(\pi - x)$. Finally we can apply (\ref{Asymptot1.1}) to obtain
	 (\ref{Asymptot1.2}). \qed
\end{proof}

\newsection{The Characteristic Determinant}\label{Chardeter1}
Consider the problem of
\begin{eqnarray}\label{floquetEqn}
	Y(z + \pi) = \rho(\lambda) Y(z),\quad\mbox{ for all }\quad z\in \R,
\end{eqnarray}
where $Y$ is a non-trivial solution of (\ref{eValProbFormal}) with $Q = Q_1$, and $\rho(\lambda)\in \C$. Here $\rho(\lambda)$ is multivalued and $Y$ can
 be
represented as $Y(z) = \mathbb{Y}(z)\underline{v}$, for some $\underline{v} \in \R^2\setminus\{0\}$. Since $Q$ is $\pi$-periodic,
we have $\mathbb{Y}(z + \pi) = \mathbb{Y}(z)\mathbb{Y}(\pi)$, which together with (\ref{floquetEqn}) for $z = 0$ yields
\begin{equation}\label{floquetEqn2}
	(\mathbb{Y}(\pi) - \rho\mathbb{I})\underline{v} = 0.
\end{equation}
A necessary and sufficient condition for the existence of nontrivial solutions of (\ref{floquetEqn2}) is 
$\mbox{det}(\mathbb{Y}(\pi) - \rho\mathbb{I}) = 0$. This may be expressed, via (\ref{GLYLin}), as
\begin{equation}\label{DetFloEqn}
\mbox{det}((\Delta^\mathbb{I} - 2\rho)\mathbb{I}	+ \Delta^JJ +
	 \nabla^\mathbb{I}\sigma_3 + \nabla^J\sigma_1) = 0.
\end{equation}
Using (\ref{detDefn}) and (\ref{WronGL}) to simplify (\ref{DetFloEqn}), we obtain
\begin{eqnarray}\label{DiracCharacteristic}
	\rho^2 - \rho\Delta^{\mathbb{I}} + 1 =  0.
\end{eqnarray}
The quantity $\Delta^\mathbb{I}$ will be called the {\it $\mathbb{I}$-discriminant} of the problem (\ref{eValProbFormal}) on
 $[0,\pi)$, and the solutions  $\rho = \frac{\Delta^\mathbb{I} \pm \sqrt{\Delta^{\mathbb{I}^2} - 4}}{2}$ of
 (\ref{DiracCharacteristic}) are called
{\it Floquet multipiers}. Similar reasoning as above may be applied to the equation
\begin{equation}
	Y(\pi) = \sigma_i\rho(\lambda) Y(0), \quad \mbox{for } i = 1,2,3,
\end{equation}  to obtain the $J,\sigma_1,\sigma_3$-discriminants which are
$\Delta^J$, $\nabla^I$ and $\nabla^J$, respectively. For brevity we refer to $\Delta^\mathbb{I}$ as $\Delta$. 

Let $\lambda \in \mathcal{S}= \{\lambda \in \R : |\Delta|\leq 2 \}$, there exist two linearly independent solutions of 
(\ref{eValProbFormal}) and
 (\ref{floquetEqn}) both of which have $|\rho|\leq 1$. The components of $\mathcal{S}$ are referred to as the {\it regions of stability}.
 Furthermore, the components of $\R\setminus\mathcal{S}$ are referred to as the {\it regions of instability}.  That these are suitable definitions will be apparent
from section \ref{sec-MainRes}.

The following lemmas are necessary for the inverse problem. The first such lemma follows the method in \cite[pg. 30]{FaU}, for Sturm-Liouville
 problems.

\begin{lem}\label{YeqYTLem}
	Let $\mathbb{Y}$ and $\tilde\mathbb{Y}$ be solutions of $\ell Y = \lambda Y$ satisfying the initial conditions
	(\ref{initialConds}) with canonical potentials $Q = Q_1$ and $\tilde{Q}=\tilde{Q_1}$, respectively.
	 If $\tilde\mathbb{Y}(\pi,\lambda) = \mathbb{Y}(\pi,\lambda)$, for all $\lambda\in\C$, then $\tilde\mathbb{Y}(z,\lambda) = \mathbb{Y}(z,\lambda)$,
	 for all $z\in \R$, $\lambda \in \C$.
\end{lem}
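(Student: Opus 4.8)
The plan is to reduce everything to equality of the two potentials. It suffices to show $\tilde Q_1 = Q_1$ almost everywhere on $[0,\pi)$: granting this, $\pi$-periodicity gives $\tilde Q = Q$ on all of $\R$, so $\mathbb{Y}(\cdot,\lambda)$ and $\tilde{\mathbb{Y}}(\cdot,\lambda)$ solve the same linear system $\ell Y = \lambda Y$ with the same initial value $\mathbb{I}$ at $0$; uniqueness of (Carath\'eodory) solutions then forces $\tilde{\mathbb{Y}}(z,\lambda) = \mathbb{Y}(z,\lambda)$ for all $z \in \R$ and $\lambda \in \C$.

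To obtain $\tilde Q_1 = Q_1$ I would run the Sturm--Liouville argument of \cite{FaU} in matrix form. Write $\tilde\ell Y := JY' + \tilde Q_1 Y$, and for $a,b \in \C^2$ put $\phi := \mathbb{Y}(\cdot,\lambda)a$, $\psi := \tilde{\mathbb{Y}}(\cdot,\lambda)b$, so $\ell\phi = \lambda\phi$ and $\tilde\ell\psi = \lambda\psi$. Since $J^T=-J$ and $Q_1,\tilde Q_1$ are symmetric, a short computation yields the Lagrange-type identity $\frac{d}{dt}\big(\psi^T J \phi\big) = \psi^T(\tilde Q_1 - Q_1)\phi$. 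Integrating over $[0,\pi]$, writing $M := \tilde Q_1 - Q_1 \in L^1(0,\pi)$, and using $\phi(0)=a$, $\psi(0)=b$, $\phi(\pi)=\mathbb{Y}(\pi,\lambda)a$, $\psi(\pi)=\tilde{\mathbb{Y}}(\pi,\lambda)b$ together with the hypothesis $\tilde{\mathbb{Y}}(\pi,\lambda)=\mathbb{Y}(\pi,\lambda)$ and the symplectic identity $A^TJA=(\det A)J$ for $2\times2$ matrices (so the boundary contributions at $0$ and $\pi$ cancel, as $\det \mathbb{Y}(\pi,\lambda)=1$), one reaches
$$ \int_0^\pi \tilde{\mathbb{Y}}(t,\lambda)^T M(t)\,\mathbb{Y}(t,\lambda)\,dt = 0 \qquad \mbox{for all } \lambda \in \C. \qquad (\star) $$
Here $M$ is again in canonical form, $M = m_1\sigma_1 + m_3\sigma_3$ with $m_1,m_3$ real; since $M$ anticommutes with $J$ one has $e^{J\lambda t}Me^{-J\lambda t}=Me^{-2J\lambda t}$, and, writing $\int_0^\pi e^{J\lambda t}Me^{-J\lambda t}\,dt$ in the Pauli basis as $A(\lambda)\sigma_1+B(\lambda)\sigma_3$, a direct computation gives $A(\lambda)+iB(\lambda) = \int_0^\pi (m_1+im_3)(t)\,e^{2i\lambda t}\,dt$.

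The substantial step is then to deduce $M\equiv0$ from $(\star)$ --- equivalently, that the products $\{\tilde{\mathbb{Y}}(\cdot,\lambda)^T(\cdot)\,\mathbb{Y}(\cdot,\lambda)\}_{\lambda\in\C}$ are complete. I would feed in the asymptotics of Lemma \ref{LemAsymptotics} (with $Q_2=\tilde Q_2=0$), namely $\mathbb{Y}(t,\lambda)=e^{-J\lambda t}+o(e^{|\Im\lambda|t})$ and $\tilde{\mathbb{Y}}(t,\lambda)^T=e^{J\lambda t}+o(e^{|\Im\lambda|t})$ uniformly in $t\in[0,\pi]$ and in $\arg\lambda$, so that the leading part of $(\star)$ is $\int_0^\pi(m_1+im_3)e^{2i\lambda t}\,dt=0$. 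The obstacle --- and I expect this to be the crux --- is that the leading asymptotics by themselves give only Riemann--Lebesgue decay of this transform, not its identical vanishing; to close the gap I would represent $\mathbb{Y},\tilde{\mathbb{Y}}$ through their Volterra (transmutation) integral representations, regroup the resulting iterated integrals, and so rewrite $(\star)$ as the vanishing of the Fourier transform of the image of $m_1+im_3$ under a bounded, boundedly invertible Volterra operator; injectivity of the Fourier transform on $L^1$ then forces $m_1+im_3\equiv0$, i.e. $M\equiv0$ and $\tilde Q_1=Q_1$. (An alternative closing I would keep in reserve is to specialise $(\star)$ to the periodic and anti-periodic eigenvalues --- common to both problems precisely because $\tilde{\mathbb{Y}}(\pi,\lambda)=\mathbb{Y}(\pi,\lambda)$ --- and use the eigenvalue/eigenfunction asymptotics to recover every Fourier coefficient of $m_1+im_3$ on $[0,\pi]$.) Combined with the reduction in the first paragraph, $\tilde Q_1 = Q_1$ completes the proof.
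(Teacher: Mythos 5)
Your derivation of the identity $\int_0^\pi \tilde{\mathbb{Y}}(t,\lambda)^T M(t)\,\mathbb{Y}(t,\lambda)\,dt = 0$ is correct: the Lagrange bracket computation uses only that $Q_1,\tilde Q_1$ are real symmetric and that $\det\mathbb{Y}(\pi,\lambda)=1$, and the reduction of the lemma to $\tilde Q_1=Q_1$ a.e.\ is sound. But this is a genuinely different route from the paper's. The paper follows the method of spectral mappings from \cite{FaU}: it forms the Weyl-type solution $\Phi=Y_2+MY_1$, observes that $\mathbb{Y}(\pi,\cdot)=\tilde{\mathbb{Y}}(\pi,\cdot)$ forces $M=\tilde M$ so that the matrix $P(z,\lambda)$ carrying $[\tilde Y_1\ \tilde\Phi]$ to $[Y_1\ \Phi]$ is entire in $\lambda$, and then uses only the crude asymptotics of Lemma \ref{LemAsymptotics} together with the maximum modulus principle and Liouville's theorem to conclude $P\equiv\mathbb{I}$, hence $\mathbb{Y}=\tilde{\mathbb{Y}}$ directly --- equality of the potentials is a consequence, not an intermediate step. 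The payoff of the paper's route is that it never needs completeness of products of solutions or transmutation kernels; everything is already supplied by Section \ref{asymp1}.

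The gap in your version is exactly where you suspected it: deducing $M\equiv 0$ from $(\star)$ is the entire content of the lemma, and your proposal leaves it as a plan. The leading asymptotics of Lemma \ref{LemAsymptotics} genuinely do not close it (the $o(e^{|\Im\lambda|t})$ remainders contribute terms of the same exponential size as the putative Fourier transform of $m_1+im_3$), and the repair you sketch --- Volterra/transmutation representations of $\mathbb{Y}$ and $\tilde{\mathbb{Y}}$, regrouping the iterated integrals so that $(\star)$ becomes the vanishing Fourier transform of $(I+V)(m_1+im_3)$ with $V$ Volterra --- is the standard completeness-of-products argument, but none of it is carried out: you neither establish the existence of transmutation kernels for canonical potentials that are merely integrable (an ingredient the paper deliberately avoids, as its introduction notes the difficulties with such machinery at this level of generality), nor verify that the regrouped operator really has the triangular (hence boundedly invertible) form. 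The fallback via periodic and anti-periodic eigenvalue asymptotics is likewise only named. As written, the proof establishes a correct and nontrivial identity $(\star)$ but does not prove the lemma; to make it self-contained you would either have to supply the transmutation-operator construction in full, or switch to the paper's $P(z,\lambda)$ argument, which needs nothing beyond what is already proved in Section \ref{asymp1}.
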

\begin{proof} Define the linear boundary operators, $U(Y) := y_2(0)$ and $V(Y) := y_2(\pi)$. Let $\Phi(z,\lambda)$ be defined by
	\begin{eqnarray}\label{Mfunct1.1}
		\Phi := Y_2 + MY_1,
	\end{eqnarray}
	 where $M$ is chosen so that $V(\Phi) = 0$. Note $U(\Phi) = 1$. Thus $M = -\frac{V(Y_2)}{V(Y_1)} = -\frac{y_{22}(\pi)}{y_{12}(\pi)}$.
	 Setting $Y_3 := y_{12}(\pi)Y_2 - y_{22}(\pi)Y_1$, we have
	\begin{equation}
		\begin{array}{cc}
		 Y_3 = \Delta_0 \Phi \quad \mbox{ where}  &
		 \Delta_0 :=  \mbox{Wron}[Y_1,{Y}_3] = y_{12}(\pi).
		\end{array}
	\end{equation}
Let $P(z,\lambda)$ be given by 
\begin{equation}\label{PDefn0.1}
	P(z,\lambda) \left( \begin{array}{cc}
 	 \tilde y_{11} & \tilde\Phi_1 \\
 \tilde y_{12}& \tilde\Phi_2 \end{array} \right)= \left( \begin{array}{cc}
y_{11} & \Phi_1 \\
 y_{12}& \Phi_2 \end{array} \right).		 
\end{equation}
Since $ \mbox{Wron}[\tilde{Y}_1, \tilde\Phi]  = 1$, a direct calculation gives
\begin{equation}\label{PDefn1.1}
	P(z,\lambda) = \left( \begin{array}{cc}
	  y_{11}\tilde\Phi_2 -\tilde y_{12}\Phi_1 &  \tilde y_{11} \Phi_1 - y_{11}\tilde\Phi_1  \\
	 y_{12}\tilde\Phi_2 - \tilde y_{12}\Phi_2 &  \tilde y_{11}\Phi_2 - y_{12}\tilde\Phi_1 \end{array} \right).
\end{equation}
Substituting (\ref{Mfunct1.1}) into the above equation gives
\begin{equation}\label{PDefn1.2}
	P(z,\lambda) = \left( \begin{array}{cc}
	  y_{11}\tilde y_{22} - y_{21}\tilde y_{12}&  y_{21} \tilde y_{11} - y_{11}\tilde y_{21}   \\
	 y_{12}\tilde y_{22} - y_{22}\tilde y_{12} &  y_{22}\tilde y_{11} -y_{12} \tilde y_{21}  \end{array} \right) +
	 (\tilde M - M)\left( \begin{array}{cc}
	  y_{11}\tilde y_{12}&  -y_{11}\tilde y_{11}  \\
	 y_{12}\tilde y_{12} &   -y_{12}\tilde y_{11}\end{array} \right).
\end{equation}
Since $\mathbb{Y}(\pi) = \tilde\mathbb{Y}(\pi)$, we have that $M(\lambda) = \tilde M(\lambda)$ for every $\lambda$, thus
 $P(z,\lambda)$ is entire for each $z\in\R$, as is $P(z,\lambda)-\mathbb{I}$. Combining (\ref{Mfunct1.1}) and (\ref{PDefn1.2}) with the identity
 $\Delta_0\mathbb{I} = \langle Y_1, Y_3\rangle\mathbb{I}$
gives  
\begin{equation}
	\Delta_0 (P(z,\lambda) - \mathbb{I}) = \left( \begin{array}{cc}
	    y_{11}(\tilde y_{32}-y_{32}) - y_{31}(\tilde y_{12}- y_{12})& y_{31}\tilde y_{11} - y_{11}\tilde y_{31}   \\
	  y_{12}\tilde y_{32}  -  y_{32}\tilde y_{12}& y_{32}(\tilde y_{11}-y_{11})  -  y_{12}(\tilde y_{31}- y_{31})   \end{array}
	 \right).
\end{equation}
Substituting the asymptotic expressions from Lemma \ref{LemAsymptotics} for $Y_1$ and $Y_3$ into the right hand side of the 
above equation gives
\begin{equation}\label{PDefnEqn1}
	\Delta_0 P(z,\lambda) = \Delta_0\mathbb{I} +  o(e^{|\Im \lambda|\pi}) \quad \mbox{ for } \lambda \in \C.
\end{equation}
We also note from Lemma \ref{LemAsymptotics}, the asymptotic estimate
\begin{equation}\label{Delta_0Asymp}
	\Delta_0 = -\sin\lambda\pi + o(e^{|\Im\lambda|\pi}).
\end{equation}
Define the sets $\tilde D_\epsilon^k := \{\lambda : |\sin\lambda\pi| <\epsilon ,|n - k|<\frac{1}{2} \}$.
Let $\tilde D_\epsilon = \cup_k \tilde{D}_\epsilon^k$ and notice that for large $|\lambda|$ and some $\epsilon >0$,
 $\tilde{D}_\epsilon^k$ contains exactly one zero of $\Delta_0$. Furthermore for large $|\lambda|$, 
\begin{equation}
	|\Delta_0|e^{-|\Im\lambda|\pi} \geq \epsilon  + o(1) \quad \mbox{ for every } \lambda \in \C\setminus\tilde{D}_\epsilon.
\end{equation}
This shows that for some $\epsilon >0$ there is a $C^* \in\R$ such that for large $|\lambda|$ we have 
\begin{equation}
	|\Delta_0| \geq C^*e^{|\Im\lambda|\pi} \quad \mbox{ for every }\lambda \in \C\setminus \tilde{D}_\epsilon.
\end{equation}
Thus
\begin{equation}\label{OneOverDeltaAsymp}
	\frac{1}{\Delta_0} = O(e^{-|\Im \lambda|\pi}) \quad \mbox{ for } \lambda \in \C \setminus \tilde{D}_\epsilon.
\end{equation}
Combining equations (\ref{PDefnEqn1}), (\ref{Delta_0Asymp}) and (\ref{OneOverDeltaAsymp}) gives that 
\begin{equation}\label{PAsymp2.1}
	P(z,\lambda) = \mathbb{I} + o(1) \quad \mbox{ for } \lambda \in \C \setminus \tilde{D}_\epsilon.
\end{equation}
The maximum modulus principle shows that relation (\ref{PAsymp2.1}) holds on $\C$.
 Thus $P$ is bounded on $\C$, hence by Liouville's Theorem, $P = \mathbb{I}$ on $\C$.
Finally, equation (\ref{PDefn0.1}) completes the Lemma. \qed
\end{proof}
\begin{lem}\label{CanonSystI}
	Suppose $Q$ in $\ell Y = \lambda Y$ is a $2 \times 2$ 
	$\pi$-periodic matrix function integrable on $[0, \pi)$, of the form
	$Q = Q_1$ then  $\Delta + 2$ has only double zeros  if and only if
	 $\mathbb{Y}(\pi) = \mathbb{Y}(\textstyle\frac{\pi}{2})^2$.
\end{lem}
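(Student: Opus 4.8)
The plan is to prove the two implications separately; the reverse one is immediate and the forward one carries all the content. Throughout write $\Delta_+:=\trace\mathbb{Y}(\pi/2)$ for the $\mathbb{I}$-discriminant at $\pi/2$ and recall $\det\mathbb{Y}(\pi/2)=\det\mathbb{Y}(-\pi/2)=1$. If $\mathbb{Y}(\pi)=\mathbb{Y}(\pi/2)^2$, then Cayley--Hamilton gives $\mathbb{Y}(\pi/2)^2=\Delta_+\mathbb{Y}(\pi/2)-\mathbb{I}$, so taking traces $\Delta+2=\Delta_+^2$; since $\Delta_+$ is entire, every zero of $\Delta+2$ has even, hence at least double, multiplicity.

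For the converse, assume all zeros of $\Delta+2$ are double; they are real, being the eigenvalues of the self-adjoint anti-periodic operator $L_2$. The first step is to show that at each such $\lambda_0$ the monodromy is exactly $\mathbb{Y}(\pi,\lambda_0)=-\mathbb{I}$. Since $\Delta(\lambda_0)=-2$ and the determinant is $1$, one has $\mathbb{Y}(\pi,\lambda_0)=-\mathbb{I}+N_0$ with $N_0^2=0$, and it must be shown that $N_0=0$. Differentiating the fundamental matrix in $\lambda$ and using variation of parameters yields $\partial_\lambda\mathbb{Y}(\pi)=\mathbb{Y}(\pi)K$ with $K=-\int_0^\pi\mathbb{Y}(s)^{-1}J\mathbb{Y}(s)\,ds$; because $Q=Q_1$ is real and symmetric, the Wronskian identity $\mathbb{Y}(s)^{T}J\mathbb{Y}(s)=J$ holds, so $\mathbb{Y}(s)^{-1}J\mathbb{Y}(s)=J\mathbb{Y}(s)^{T}\mathbb{Y}(s)$ and $K=-J\mathcal{M}$, where $\mathcal{M}:=\int_0^\pi\mathbb{Y}(s)^{T}\mathbb{Y}(s)\,ds$ is real, symmetric and positive definite at real $\lambda_0$. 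Hence $\Delta'(\lambda_0)=\trace\!\big((-\mathbb{I}+N_0)(-J\mathcal{M})\big)=-\trace(N_0 J\mathcal{M})$, using $\trace(J\mathcal{M})=0$; writing the rank-one $N_0=uv^{T}$ with $v^{T}u=0$ forces $u=cJv$ with $c\neq0$, and then $-\trace(N_0 J\mathcal{M})=c\,(Jv)^{T}\mathcal{M}(Jv)\neq0$, contradicting the double zero. So $N_0=0$, i.e. $\mathbb{Y}(\pi,\lambda_0)=-\mathbb{I}$ at every zero of $\Delta+2$.

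The second step introduces the entire matrix function $\mathbb{F}(\lambda):=\mathbb{Y}(-\pi/2,\lambda)^{-1}-\mathbb{Y}(\pi/2,\lambda)$ (entire since $\mathbb{Y}(-\pi/2)^{-1}$ is the adjugate of a determinant-one matrix). From $\mathbb{Y}(z+\pi)=\mathbb{Y}(z)\mathbb{Y}(\pi)$ at $z=-\pi/2$ one gets $\mathbb{Y}(\pi)=\mathbb{Y}(-\pi/2)^{-1}\mathbb{Y}(\pi/2)$, so $\mathbb{F}\equiv0$ is precisely the desired identity $\mathbb{Y}(\pi)=\mathbb{Y}(\pi/2)^2$. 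Since $Q_2=0$, Lemma \ref{LemAsymptotics} gives $\mathbb{Y}(\pm\pi/2)=e^{\mp J\lambda\pi/2}+o(e^{|\Im\lambda|\pi/2})$ uniformly in $\arg\lambda$, and because the adjugate is linear and maps $e^{J\theta}$ to $e^{-J\theta}$, also $\mathbb{Y}(-\pi/2)^{-1}=e^{-J\lambda\pi/2}+o(e^{|\Im\lambda|\pi/2})$; the leading terms therefore cancel and $\mathbb{F}=o(e^{|\Im\lambda|\pi/2})$. At a zero $\lambda_0$ of $\Delta+2$, the first step together with $\mathbb{Y}(-\pi/2)^{-1}=\mathbb{Y}(\pi)\mathbb{Y}(\pi/2)^{-1}$ gives $\mathbb{F}(\lambda_0)=-\big(\mathbb{Y}(\pi/2,\lambda_0)+\mathbb{Y}(\pi/2,\lambda_0)^{-1}\big)=-\Delta_+(\lambda_0)\mathbb{I}$, a scalar matrix, so the traceless part $\mathbb{F}_0:=\mathbb{F}-\frac{1}{2}(\trace\mathbb{F})\mathbb{I}$ vanishes at every zero of $\Delta+2$. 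Now $\Delta+2=4\cos^2(\lambda\pi/2)+o(e^{|\Im\lambda|\pi})$, so for $|\lambda|$ large the (double) zeros of $\Delta+2$ lie one near each odd integer, whence $h:=\sqrt{\Delta+2}$ is entire of exponential type $\pi/2$ with simple zeros exactly at those of $\Delta+2$ and $|h(\lambda)|\ge c\,e^{|\Im\lambda|\pi/2}$ off small disks about them. Then each scalar entry of $\mathbb{F}_0/h$ is entire and $o(1)$ off those disks, hence bounded on $\C$ by the maximum-modulus argument used for Lemma \ref{YeqYTLem}, and tends to $0$; so $\mathbb{F}_0\equiv0$ and $\mathbb{F}=\phi\,\mathbb{I}$ for an entire $\phi$. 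Taking determinants in $\mathbb{Y}(-\pi/2)^{-1}=\mathbb{Y}(\pi/2)+\phi\mathbb{I}$ gives $\phi^2+\phi\Delta_+\equiv0$, so $\phi\equiv0$ or $\phi\equiv-\Delta_+$ by connectedness; in the second case $\mathbb{Y}(\pi)=\mathbb{Y}(\pi/2)^2-\Delta_+\mathbb{Y}(\pi/2)=-\mathbb{I}$ identically by Cayley--Hamilton, contradicting $\mathbb{Y}(\pi)=e^{-J\lambda\pi}+o(e^{|\Im\lambda|\pi})$. Hence $\phi\equiv0$ and $\mathbb{Y}(\pi)=\mathbb{Y}(\pi/2)^2$.

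I expect the main obstacle to be the end of the converse. The crude estimate on $\mathbb{Y}(\pi)-\mathbb{Y}(\pi/2)^2$ is only $o(e^{|\Im\lambda|\pi})$, which is too weak for a Liouville argument; the key is to work instead with $\mathbb{F}=\mathbb{Y}(-\pi/2)^{-1}-\mathbb{Y}(\pi/2)$, whose leading asymptotics genuinely cancel to $o(e^{|\Im\lambda|\pi/2})$ thanks to the linearity of the adjugate, and then to exploit that $\mathbb{F}$ is scalar at the (real) double zeros together with the asymptotic density of those zeros to annihilate the traceless part. The derivative computation giving $\mathbb{Y}(\pi,\lambda_0)=-\mathbb{I}$ is also delicate, depending on the reality of the zeros, the positive definiteness of $\mathcal{M}$, and careful sign bookkeeping.
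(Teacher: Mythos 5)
Your argument is correct and, while it shares the paper's overall architecture (monodromy equal to $-\mathbb{I}$ at the double anti-periodic eigenvalues, an entire matrix function divided by $\sqrt{\Delta+2}$, a maximum-modulus/Liouville step, and finally the identity $\mathbb{Y}(-\frac{\pi}{2})^{-1}=\mathbb{Y}(\frac{\pi}{2})$), it executes the key steps differently. The paper works with $F(-\frac{\pi}{2},\lambda)=\mathbb{Y}(\frac{\pi}{2})+\mathbb{Y}(-\frac{\pi}{2})$, shows $F/\sqrt{\Delta+2}=O(1)$, identifies the constant matrix as $\mathbb{I}$ by evaluating along the imaginary axis, then projects onto the Pauli basis and rules out the sign $\Delta_+^{\mathbb{I}}=-\Delta_-^{\mathbb{I}}$ by observing it would force $\sigma(L_2)=\C$. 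You instead work with $\mathbb{F}=\mathbb{Y}(-\frac{\pi}{2})^{-1}-\mathbb{Y}(\frac{\pi}{2})$, whose leading asymptotics genuinely cancel, so the quotient by $\sqrt{\Delta+2}$ is $o(1)$ and only the traceless part must be annihilated at the zeros (no constant needs to be evaluated); and you exclude the spurious branch $\phi\equiv-\Delta_+$ by a determinant/Cayley--Hamilton identity together with the asymptotics of $\mathbb{Y}(\pi)$. Your derivative computation, $\Delta'(\lambda_0)=c\,(Jv)^{T}\mathcal{M}(Jv)\neq0$ unless $N_0=0$, is a genuine strengthening of the paper's treatment: the paper simply asserts that at a double zero the eigenspace of $L_2$ is two-dimensional, whereas you prove that a nontrivial Jordan block is incompatible with $\Delta'(\lambda_0)=0$, using the symplectic identity $\mathbb{Y}^{T}J\mathbb{Y}=J$ (valid here because $Q_1$ is real symmetric) and the positive definiteness of $\mathcal{M}$ at real $\lambda_0$.

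Two small points. In the easy direction you only conclude from $\Delta+2=\Delta_+^2$ that each zero has even order; to obtain \emph{only double} zeros you still need the order to be at most two, which the paper supplies by noting that the eigenspaces of the self-adjoint operator $L_2$ have dimension at most $2$, so the algebraic multiplicity of an anti-periodic eigenvalue cannot exceed $2$. Also, your assertion that $h=\sqrt{\Delta+2}$ is entire with simple zeros tacitly invokes the Hadamard factorization of the order-one function $\Delta+2$ with all zeros double, exactly as the paper does; this is worth stating explicitly.
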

\begin{proof}
	Assume $\mathbb{Y}(\pi) = \mathbb{Y}(\textstyle\frac{\pi}{2})^2$. A direct calculation gives 
	\begin{eqnarray}
		\mathbb{Y}^2(\textstyle\frac{\pi}{2}) &=& \frac{1}{4}(\Delta_+^\mathbb{I}\mathbb{I} + \Delta_+^JJ +
		 \nabla_+^\mathbb{I}\sigma_3 + \nabla_+^J\sigma_1)^2, \label{Ysquared2} \\
		&=& \frac{1}{4}((\Delta_+^\mathbb{I})^2 + (\nabla_+^J)^2 + (\nabla_+^\mathbb{I})^2 -
		 (\Delta_+^J)^2)\mathbb{I} +
		 \sum_{i=1}^3 d_i\sigma_i,
	\end{eqnarray}
	where $d_i$ are analytic functions of order $1$. Using equation (\ref{WronGL}) we have
	\begin{equation}\label{YPlusSqr12}
		\mathbb{Y}^2(\textstyle\frac{\pi}{2}) = \frac{1}{4}(2(\Delta_+^\mathbb{I})^2 - 4)\mathbb{I} + \sum_{i=1}^3
		 d_i\sigma_i.
	\end{equation}
	However, by assumption $\langle\mathbb{Y}(\textstyle\frac{\pi}{2})^2 -\mathbb{Y}(\pi), \mathbb{I} \rangle_{Lin} =
	 0$, thus using (\ref{YPlusSqr12})
	and the fact that the Pauli matrices form an orthonormal basis, gives
	\begin{equation}\label{traceG}
		(\Delta_+^\mathbb{I})^2 = \Delta + 2.
	\end{equation}
	 The above relation shows that the zeros of $\Delta + 2$ are at least of order $2$, but the maximal dimension of
	 every
	 eigenspace of $L_2$ is $2$. Thus $\Delta + 2$ has only double zeros.

	Conversely, assume $\Delta + 2$ has only double zeros. 
	$\mathbb{Y}(\pi)$ is an entire matrix valued function of order $1$, thus $\Delta + 2$
	is an entire function of order $1$.

	At every double zero, $\lambda = \tilde\lambda$, of $\Delta + 2$,
	 the corresponding instability interval vanishes, furthermore the eigenspace of $L_2$ is of dimension $2$, thus every solution is $\pi$ anti-periodic, giving
	\begin{eqnarray}
		F(z,\lambda) := \mathbb{Y}(z + \pi) + \mathbb{Y}(z) = 0. 
	\end{eqnarray}
	This condition is also necessary for an anti-periodic eigenvalue to be double.
	Since $\Delta + 2$ is an entire function of order $1$ with all zeros being double, it follows from the Hadamard
	expansion of $\Delta + 2$ as an infinite product that $\sqrt{\Delta + 2}$ is an entire function of order $\textstyle\frac{1}{2}$ with all
	zeros simple. Now $F(z,\lambda)$ is an entire function of order $1$, and the zeros of $\sqrt{\Delta + 2}$ and   	
 	$F(z,\lambda)$ coincide. Thus $\frac{F(z,\lambda)}{\sqrt{\Delta + 2}}$ is an entire function.

	Lemma \ref{LemAsymptotics} and (\ref{DiscrimDef01}) give
	\begin{equation}\label{DeltaP2Asymptotic}
		\Delta + 2 = 2\cos\lambda\pi + 2 +  o\left(e^{|\Im \lambda|\pi}\right), \label{DeltaAsymp}
	\end{equation}
	thus 
	\begin{equation}
		|\Delta + 2|e^{-|\Im \lambda|\pi} = \left|(2\cos\lambda\pi + 2)e^{-|\Im \lambda|\pi} +
		 o(1)\right|.
	\end{equation}
	Define the sets
	\begin{equation}
		D^k_\epsilon := \{ \lambda : |2\cos\lambda\pi + 2| < \epsilon,
		 |\lambda - (2k+1)| < \textstyle\frac{1}{2} \},
	\end{equation}
	for each $k \in \Z$ and a fixed $\epsilon > 0$ so small so that every $D^k_\epsilon$ is a single simply connected set.
	For brevity we write $D_{\epsilon} = \cup_k D^k_\epsilon$ and note that for large $|\lambda|$ each $D^k_\epsilon$
	contains a exactly one zero of $\Delta + 2$.
	For $\lambda \in \C \setminus D_\epsilon$, large $|\Im\lambda|$, we have $|2\cos\lambda\pi +
	 2|e^{-|\Im
	 \lambda|\pi} \geq \frac{1}{2}$. For $C>0$ and $\lambda \in \C \setminus D_\epsilon$ with $|\Im\lambda| \leq C$ and for large $|\Re\lambda|$ we have
	 $|2\cos\lambda\pi + 2|e^{-|\Im
	 \lambda|\pi} \geq \epsilon e^{-c\pi}$.  Thus there exists a $k > 0$ so large 
	 that 
	\begin{equation}
		|\Delta + 2|e^{-|\Im \lambda|\pi} \geq  \min\{ \textstyle\frac{1}{2}, \epsilon
		 e^{-c\pi} \}  + o(1), \quad \mbox{ for all } \quad |\lambda| \geq k,
	\end{equation}
	for $\lambda \in \C \setminus D_\epsilon$. Hence
	\begin{equation}\label{DeltaAsymp2}
		\frac{1}{\sqrt{\Delta + 2}} = O\left(e^{-|\Im\lambda|\frac{\pi}{2}}\right) \quad \mbox{ for} \quad
		\lambda \in \C \setminus D_\epsilon. \label{asymp1.2}
	\end{equation}
	Lemma \ref{LemAsymptotics} and $e^{-\lambda J \frac{\pi}{2}} +  e^{\lambda J \frac{\pi}{2}} = 2\mathbb{I}\cos(\lambda
	 \frac{\pi}{2})$ yield
	\begin{eqnarray}
		F(-\frac{\pi}{2},\lambda) &=& e^{-\lambda J \frac{\pi}{2}} +  e^{\lambda J \frac{\pi}{2}} +
		 o(e^{|\Im\lambda|\frac{\pi}{2}}), \\
									&=& 2\mathbb{I}\cos(\lambda\textstyle\frac{\pi}{2})  +
									 o\left(e^{|\Im\lambda|\frac{\pi}{2}}\right). 
		 \label{Fasymp1.1}
	\end{eqnarray}
	Combining (\ref{DeltaAsymp2}) and (\ref{Fasymp1.1}) yields 
	\begin{eqnarray}\label{FAsymp4}
		\tilde F := \frac{F(-\frac{\pi}{2},\lambda)}{\sqrt{\Delta + 2}} = O(1), \label{tildeFasymp}
	\end{eqnarray}
	for $\lambda \in \C \setminus D_\epsilon$. However $\tilde F$ is entire in $\C$, so the maximum modulus principle
	 gives that $\tilde{F} = O(1)$ in $\C$, thus is constant in $\C$ by Liouville's Theorem. So there exists $a,b,c,d \in \mathbb{C}$
	 such that 
	\begin{equation}\label{FFunctConstLiou}
		\frac{F(-\frac{\pi}{2},\lambda)}{\sqrt{\Delta + 2}} = \left( \begin{array}{cc} a  & b  \\
			  c  &  d \end{array} \right), \quad \mbox{ for all } \lambda \in \mathbb{C}.
	\end{equation}
	
	For $\lambda = i\zeta$, $\zeta \rightarrow \infty$, equation (\ref{DeltaAsymp}) gives
		$\frac{1}{\Delta + 2} = e^{-\pi\zeta}(1 + o(1))$, thus
		$\frac{1}{\sqrt{\Delta + 2}} = {e^{-\frac{\pi}{2}\zeta}}(1 + o(1)))$. Furthermore,
		 (\ref{Fasymp1.1}) gives
		\begin{eqnarray}
			F(-\frac{\pi}{2},i \zeta) &=& e^{\frac{\pi\zeta}{2}}(\mathbb{I} + o(1)), \label{finalAsym1}
		\end{eqnarray}
	hence $a =1 = d$ and $c = 0 = b$, thus 
	\begin{equation}\label{SumYDelta1}
		\mathbb{Y}(\textstyle\frac{\pi}{2}) + \mathbb{Y}(-\textstyle\frac{\pi}{2}) = \mathbb{I}\sqrt{\Delta + 2}. \label{Sum1}
	\end{equation}

So the analogues of (\ref{GLYLin}) at $\mathbb{Y}(\textstyle\frac{\pi}{2})$ and $\mathbb{Y}(-\textstyle\frac{\pi}{2})$
combined with (\ref{SumYDelta1}) give
\begin{equation}
	(\Delta^\mathbb{I}_{+} + \Delta^\mathbb{I}_{-})\mathbb{I} + (\Delta^J_{+} + \Delta^J_{-})J +
	 (\nabla^\mathbb{I}_{+} +
	 \nabla^\mathbb{I}_{-})\sigma_3 + (\nabla^J_{+} + \nabla^J_{-})\sigma_1
	= 2\mathbb{I}\sqrt{\Delta + 2}.
\end{equation}
Applying the inner product $\langle \cdot, \sigma_i \rangle_{Lin}$, $i = 0,...,3$, to both sides of the above equation gives
\begin{eqnarray}
	\Delta^\mathbb{I}_{+} + \Delta^\mathbb{I}_{-} &= 2\sqrt{\Delta + 2}, \label{DI} \\ 
	\Delta^J_{+} &= -\Delta^J_{-}, \label{DJ}\\
	\nabla^\mathbb{I}_{+} &= -\nabla^\mathbb{I}_{-},\label{NI} \\
	\nabla^J_{+} &= -\nabla^J_{-}.\label{NJ}
\end{eqnarray}
Furthermore, equations (\ref{WronGL}), (\ref{DJ}), (\ref{NI}) and (\ref{NJ}) gives $\Delta^\mathbb{I}_{+} = \pm \Delta^\mathbb{I}_{-}$,
but if $\Delta^\mathbb{I}_{+} = - \Delta^\mathbb{I}_{-}$, equation (\ref{DI}) shows that $\sigma(L_2) = \C$, which is
 not
possible as $L_2$ is a self-adjoint operator and thus has $\sigma(L_2) \subset \R$. Hence
\begin{equation}
	\Delta^\mathbb{I}_{+} =  \Delta^\mathbb{I}_{-}. \label{DI0}
\end{equation} 

A direct calculation shows that 
\begin{equation}\label{FMInverse1}
	\mathbb{Y}^{-1}(-\textstyle\frac{\pi}{2}) = \frac{1}{2}(\Delta^\mathbb{I}_{+}\mathbb{I} -\Delta^J_{-}J - \nabla^\mathbb{I}_{-}\sigma_3 -
	 \nabla^J_{-}\sigma_1).
\end{equation}
Applying (\ref{DJ})-(\ref{DI0}) to (\ref{FMInverse1}) shows that 
$\mathbb{Y}^{-1}(-\frac{\pi}{2}) = \mathbb{Y}(\frac{\pi}{2})$. Since $Q$ is $\pi$-periodic, the fundamental
 matrix solutions
 $\mathbb{Y}(z)$ and 
$\mathbb{Y}(z + \pi)$ of (\ref{eValProbFormal}) are related by
\begin{equation}\label{SolnTrans09}
	\mathbb{Y}(z + \pi) = \mathbb{Y}(z)\mathbb{Y}(\pi).
\end{equation}  
Setting $z = -\frac{\pi}{2}$ in (\ref{SolnTrans09}) gives
\begin{equation}\label{FMSTrans4}
	\mathbb{Y}(\textstyle\frac{\pi}{2}) = \mathbb{Y}(-\textstyle\frac{\pi}{2})\mathbb{Y}({\pi}),
\end{equation}
thus
\begin{equation}
	\mathbb{Y}(\pi) = \mathbb{Y}(\textstyle\frac{\pi}{2})^2. \label{suff02}
\end{equation}
	\qed
\end{proof}
\begin{lem}\label{CanonSystI2}
	Suppose $Q$ in $\ell Y = \lambda Y$ is a $2 \times 2$ 
	$\pi$-periodic matrix function integrable on $[0, \pi)$, of the form
	$Q = Q_1$
	then  $\Delta - 2$ has only double zeros if and only if
	\mbox{$\mathbb{Y}(\pi) = (\sigma_2 \mathbb{Y}(\textstyle\frac{\pi}{2}))^2$}.
\end{lem}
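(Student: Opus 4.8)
The plan is to prove Lemma~\ref{CanonSystI2} by copying the proof of Lemma~\ref{CanonSystI} almost verbatim, replacing the anti-periodic data $(\Delta+2,\ \mathbb{Y}(z+\pi)+\mathbb{Y}(z))$ by the periodic data $(\Delta-2,\ \mathbb{Y}(z+\pi)-\mathbb{Y}(z))$ and carrying a conjugation by $\sigma_2$ through every step. For the \emph{sufficiency} direction I would assume $\mathbb{Y}(\pi)=(\sigma_2\mathbb{Y}(\textstyle\frac{\pi}{2}))^2$ and take traces. Writing $M:=\sigma_2\mathbb{Y}(\frac{\pi}{2})$, the identity $\trace(M^2)=(\trace M)^2-2\,\mbox{det}\,M$, together with $\mbox{det}\,M=\mbox{det}\,\sigma_2\cdot\mbox{det}\,\mathbb{Y}(\frac{\pi}{2})=-1$ and, from the analogue of (\ref{GLYLin}) at $\frac{\pi}{2}$ together with $\sigma_2J=-i\mathbb{I}$, $\trace M=-i\Delta^J_+$, gives
\[ (\Delta^J_+)^2 \;=\; -(\Delta-2). \]
Since $\Delta^J_+$ is entire, every zero of $\Delta-2$ is a zero of even order at least $2$; as the eigenspace of $L_1$ has dimension at most $2$, the same argument as in Lemma~\ref{CanonSystI} forces all zeros of $\Delta-2$ to be double.

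For the \emph{necessity} direction I would assume all zeros of $\Delta-2$ are double. As in Lemma~\ref{CanonSystI}, $\Delta-2$ is entire of order $1$; at each (double) periodic eigenvalue $\mathbb{Y}(\pi)=\mathbb{I}$, so every solution is $\pi$-periodic and $G(z,\lambda):=\mathbb{Y}(z+\pi)-\mathbb{Y}(z)=\mathbb{Y}(z)(\mathbb{Y}(\pi)-\mathbb{I})$ vanishes identically in $z$ there, while $\sqrt{\Delta-2}$ is entire of order $\frac12$ with only simple zeros; hence $G(z,\lambda)/\sqrt{\Delta-2}$ is entire for each $z$. Evaluating at $z=-\frac{\pi}{2}$ and using Lemma~\ref{LemAsymptotics} with $Q_2=0$ (so $\mathbb{Y}(\pm\frac{\pi}{2})=e^{\mp J\lambda\pi/2}+o(e^{|\Im\lambda|\pi/2})$) I obtain $G(-\frac{\pi}{2},\lambda)=-2\sin(\lambda\frac{\pi}{2})J+o(e^{|\Im\lambda|\pi/2})$ and $\Delta-2=-4\sin^2(\lambda\frac{\pi}{2})+o(e^{|\Im\lambda|\pi})$. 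Repeating the disk estimate of Lemma~\ref{CanonSystI}, now with the exceptional disks $D^k_\epsilon$ centred at the even integers, gives $1/\sqrt{\Delta-2}=O(e^{-|\Im\lambda|\pi/2})$ off the disks, so $G(-\frac{\pi}{2},\lambda)/\sqrt{\Delta-2}=O(1)$ there; the maximum modulus principle and Liouville's theorem then make it a constant matrix, and evaluating along $\lambda=i\zeta$, where $-2\sin(i\zeta\frac{\pi}{2})J=-2\sinh(\zeta\frac{\pi}{2})\sigma_2$ and $\sqrt{\Delta-2}\sim2\sinh(\zeta\frac{\pi}{2})$, identifies this constant (up to the choice of branch of $\sqrt{\Delta-2}$) as $\pm\sigma_2$. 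Thus $\mathbb{Y}(\frac{\pi}{2})-\mathbb{Y}(-\frac{\pi}{2})=\pm\sigma_2\sqrt{\Delta-2}$.

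From this master identity I would apply $\langle\,\cdot\,,\sigma_i\rangle_{Lin}$, $i=0,\dots,3$, together with the analogue of (\ref{GLYLin}) at $\pm\frac{\pi}{2}$, to read off $\Delta^\mathbb{I}_+=\Delta^\mathbb{I}_-$, $\nabla^\mathbb{I}_+=\nabla^\mathbb{I}_-$, $\nabla^J_+=\nabla^J_-$ and $\Delta^J_+-\Delta^J_-=\pm2i\sqrt{\Delta-2}$. Subtracting the two copies of (\ref{WronGL}) at $\pm\frac{\pi}{2}$ and using the first three identities gives $(\Delta^J_+)^2=(\Delta^J_-)^2$; the case $\Delta^J_+=\Delta^J_-$ would force $\sqrt{\Delta-2}\equiv0$, i.e.\ $\sigma(L_1)=\C$, which is impossible since $L_1$ is self-adjoint, so $\Delta^J_+=-\Delta^J_-$. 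Feeding the four identities into $\sigma_2\mathbb{Y}(\frac{\pi}{2})\sigma_2=\frac12(\Delta^\mathbb{I}_+\mathbb{I}+\Delta^J_+J-\nabla^\mathbb{I}_+\sigma_3-\nabla^J_+\sigma_1)$ and, using $\mbox{det}\,\mathbb{Y}=1$, $\mathbb{Y}(-\frac{\pi}{2})^{-1}=\frac12(\Delta^\mathbb{I}_-\mathbb{I}-\Delta^J_-J-\nabla^\mathbb{I}_-\sigma_3-\nabla^J_-\sigma_1)$, yields $\sigma_2\mathbb{Y}(\frac{\pi}{2})\sigma_2=\mathbb{Y}(-\frac{\pi}{2})^{-1}$. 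Finally, putting $z=-\frac{\pi}{2}$ in (\ref{SolnTrans09}) gives $\mathbb{Y}(\frac{\pi}{2})=\mathbb{Y}(-\frac{\pi}{2})\mathbb{Y}(\pi)$, whence $\mathbb{Y}(\pi)=\mathbb{Y}(-\frac{\pi}{2})^{-1}\mathbb{Y}(\frac{\pi}{2})=\sigma_2\mathbb{Y}(\frac{\pi}{2})\sigma_2\,\mathbb{Y}(\frac{\pi}{2})=(\sigma_2\mathbb{Y}(\frac{\pi}{2}))^2$.

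The step I expect to be the main obstacle is obtaining the $\sigma_2$-twisted master identity $\mathbb{Y}(\frac{\pi}{2})-\mathbb{Y}(-\frac{\pi}{2})=\pm\sigma_2\sqrt{\Delta-2}$ with the \emph{correct} (constant) right-hand side: this rests on the precise asymptotic form of $G(-\frac{\pi}{2},\lambda)$ and on a careful, branch-consistent treatment of $\sqrt{\Delta-2}$, followed by extracting the sign $\Delta^J_+=-\Delta^J_-$ from $(\Delta^J_+)^2=(\Delta^J_-)^2$. Everything else — the order-$1$/order-$\frac12$ bookkeeping for the Hadamard factorisation, the estimates on the disks $D^k_\epsilon$, and the Pauli-basis manipulations — is a routine transcription of the proof of Lemma~\ref{CanonSystI}.
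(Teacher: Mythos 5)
Your proposal is correct and follows essentially the same route as the paper's proof: sufficiency by taking the trace of $(\sigma_2\mathbb{Y}(\frac{\pi}{2}))^2$ to obtain $(\Delta^J_+)^2 = 2-\Delta$, and necessity via the entire quotient $(\mathbb{Y}(z+\pi)-\mathbb{Y}(z))/\sqrt{2-\Delta}$, Liouville's theorem, the Pauli-component identities, the self-adjointness argument forcing $\Delta^J_+=-\Delta^J_-$, and the monodromy relation at $z=-\frac{\pi}{2}$. The only differences are cosmetic (the trace identity $\trace(M^2)=(\trace M)^2-2\,\mbox{det}\,M$ in place of explicit Pauli-basis squaring, and your more careful tracking of the sign/branch of $\sqrt{2-\Delta}$, where your $-2J\sin(\lambda\frac{\pi}{2})$ asymptotic is in fact the correct one).
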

\begin{proof}
	Let us assume that $\mathbb{Y}(\pi) =
	 (\sigma_2\mathbb{Y}(\textstyle\frac{\pi}{2}))^2$. We have 
	\begin{equation}\label{SigmaY2eqn}
		\sigma_2 \mathbb{Y}(\textstyle\frac{\pi}{2}) = \frac{1}{2}(\Delta^\mathbb{I}_{+}\sigma_2 -i\Delta^J_{+}\mathbb{I} -
		 i\nabla^\mathbb{I}_{+}\sigma_1 +
		 i\nabla^J_{+}\sigma_3),
	\end{equation}
	thus using (\ref{WronGL}) a direct computation gives
	\begin{equation}
		(\sigma_2 \mathbb{Y}(\textstyle\frac{\pi}{2}))^2 = \frac{1}{4}(4 - 2(\Delta^J_+)^2)\mathbb{I} + \sum_{i=1}^3 d_i\sigma_i,
	\end{equation}
	where $d_i$ are analytic functions of order $1$. Considering that the Pauli matrices form an orthonormal set, using
	$\mathbb{Y}(\pi) -
	 (\sigma_2\mathbb{Y}(\textstyle\frac{\pi}{2}))^2 = 0$, we calculate 
	$\langle \mathbb{Y}(\pi) - (\sigma_2\mathbb{Y}(\textstyle\frac{\pi}{2}))^2, \mathbb{I} \rangle_{Lin} = 0$, to find
	\begin{equation}\label{Deltamin2doubleZ}
		(\Delta_+^J)^2 = 2 -\Delta^\mathbb{I}.
	\end{equation}
	The zeros of $(\Delta_+^J)^2$ are at least of order $2$, however the maximal dimension of every
	 eigenspace of $L_1$ is $2$. Thus all the zeros of $\Delta - 2$ are double.
	
	For sufficiency, assume that all the zeros of $\Delta - 2$ are double. Define
	\begin{equation}
		H(x,\lambda) := \mathbb{Y}(x + \pi) - \mathbb{Y}(x).
	\end{equation}
	Using similar reasoning to Lemma \ref{CanonSystI}, we have that $\frac{H(z,\lambda)}{\sqrt{2 - \Delta}}$ is an entire
	 function, thus we have 
	\begin{equation}\label{HfunctionAsymp2}
		H(-\textstyle\frac{\pi}{2},\lambda) = 2J\sin(\lambda\textstyle\frac{\pi}{2}) + o(e^{|\Im\lambda|\frac{\pi}{2}}).
	\end{equation}
	
	Lemma \ref{LemAsymptotics} and (\ref{DiscrimDef01}) give
	\begin{equation}\label{DeltaP2Asymptotic3}
		2 - \Delta = 2 - 2\cos\lambda\pi +  o\left(e^{|\Im \lambda|\pi}\right), 
	\end{equation}
	thus 
	\begin{equation}
		|2 - \Delta|e^{-|\Im \lambda|\pi} = \left|(2 - 2\cos\lambda\pi)e^{-|\Im \lambda|\pi} +
		 o(1)\right|.
	\end{equation}
	Define the sets
	\begin{equation}
		\hat{D}^k_\epsilon := \{ \lambda : |2\cos\lambda\pi - 2| < \epsilon,
		 |\lambda - 2k| < \textstyle\frac{1}{2} \},
	\end{equation}
	for each $k \in \Z$ and a fixed $\epsilon > 0$ so small so that every $\hat{D}^k_\epsilon$ is a single simply connected
	 set.
	For brevity we write $\hat{D}_{\epsilon} = \cup_k \hat{D}^k_\epsilon$ and note that for large $|\lambda|$ each
	 $\hat{D}^k_\epsilon$ contains a exactly one zero of $\Delta - 2$.
	Following reasoning as in Lemma \ref{CanonSystI} we have 
	\begin{equation}\label{DeltaAsymp3}
		\frac{1}{\sqrt{2 - \Delta}} = O\left(e^{-|\Im\lambda|\frac{\pi}{2}}\right) \quad \mbox{ for } \quad
		\lambda \in \C \setminus \hat{D}_\epsilon. 
	\end{equation} 
	Thus
	\begin{equation}
		\tilde{H} := \frac{H(-\textstyle\frac{\pi}{2},\lambda)}{\sqrt{2 - \Delta}} = O(1),
	\end{equation}
	for $\lambda \in \C \setminus \hat{D}_\epsilon$.
	Since $\tilde{H}$ is entire on $\C$, the maximum-modulus theorem shows that it is bounded on $\C$. Hence by Liouville's
	 theorem $\tilde{H}$ is constant on $\C$.
	For $\lambda = i\zeta$, $\zeta \rightarrow \infty$, equation (\ref{DeltaP2Asymptotic3}) gives 
	$\frac{1}{\sqrt{2 - \Delta}} = e^{-\zeta\frac{\pi}{2}}(-i  + o(1))$, also equation 
	(\ref{HfunctionAsymp2}) gives
	\begin{equation}
		H(-\textstyle\frac{\pi}{2},\lambda) =  e^{\zeta \frac{\pi}{2}}(iJ + o(1)),
	\end{equation}
	thus 
	\begin{equation}
		\frac{H(-\textstyle\frac{\pi}{2},\lambda)}{\sqrt{2 - \Delta}} =  J + o(1).
	\end{equation}
	Giving
	\begin{equation}\label{SCharEqn443}
		\mathbb{Y}(\textstyle\frac{\pi}{2}) - \mathbb{Y}(-\textstyle\frac{\pi}{2}) =  J\sqrt{2 - \Delta}.
	\end{equation}
	Similarly to Lemma \ref{CanonSystI}, the analogues of (\ref{GLYLin}) at $\mathbb{Y}(\textstyle\frac{\pi}{2})$ and $\mathbb{Y}(-\textstyle\frac{\pi}{2})$
	combined with (\ref{SCharEqn443}), give the expansion
	\begin{equation}
		(\Delta^\mathbb{I}_{-} - \Delta^\mathbb{I}_{+})\mathbb{I} + (\Delta^J_{-} - \Delta^J_{+})J + (\nabla^\mathbb{I}_{-} -
		 \nabla^\mathbb{I}_{+})\sigma_3 + (\nabla^J_{-} + \nabla^J_{+})\sigma_1
		= 2 J\sqrt{2 -\Delta}.
	\end{equation}
	Applying the inner product $\langle \cdot, \sigma_i \rangle_{Lin}$ to the above equation yields
	\begin{eqnarray}
		\Delta^J_{-} - \Delta^J_{+} &= 2\sqrt{2 -\Delta}, \label{DI2} \\ 
		\Delta^\mathbb{I}_{-} &= \Delta^\mathbb{I}_{+}, \label{DJ2}\\
		\nabla^\mathbb{I}_{-} &= \nabla^\mathbb{I}_{+},\label{NI2} \\
		\nabla^J_{-} &= \nabla^J_{+}.\label{NJ2}
	\end{eqnarray}
	The identity (\ref{WronGL}) together with (\ref{DJ2})-(\ref{NJ2}) gives 
	$\Delta^J_{-} = -\Delta^J_{+}$, otherwise (\ref{DI2}) yields $\sigma(L_1) = \C$,  which is not possible since $L_1$ is
	self-adjoint. A direct calculation shows that 
	\begin{equation}
		\sigma_2 \mathbb{Y}(\textstyle\frac{\pi}{2})\sigma_2 = \frac{1}{2}(\Delta^\mathbb{I}_{+}\mathbb{I} +
		 \Delta^J_{+}J -
		 \nabla^\mathbb{I}_{+}\sigma_3 - \nabla^J_{+}\sigma_1).
	\end{equation}
	Comparing the above equation with (\ref{FMInverse1}) and (\ref{DJ2})-(\ref{NJ2}) shows that
	$\sigma_2 \mathbb{Y}(-\frac{\pi}{2})\sigma_2 = \mathbb{Y}(\frac{\pi}{2})^{-1}$. Thus using (\ref{FMSTrans4})
	 we have
	\begin{equation}
		\mathbb{Y}(\pi) = (\sigma_2\mathbb{Y}(\textstyle\frac{\pi}{2}))^2.
	\end{equation}
	 \qed

\end{proof}
\newsection{Main results \label{sec-MainRes}}

We are now in a position to prove our main theorems. Let
\begin{eqnarray}
	R(z) &:=& e^{J\int_0^z (Q_2 - \frac{1}{\pi}\int_0^\pi Q_2 d\tau) dt}, \label{RfunctionDefn}
\end{eqnarray}
thus $Y = R\tilde{Y}$ transforms $\ell Y = \lambda Y$ into
\begin{equation}\label{QEqn1}
	J\tilde{Y}' + \tilde{Q}\tilde{Y} = \lambda\tilde{Y},
\end{equation} 
where $\tilde{Q} = \tilde{Q_1} + \tilde{Q_2}$ in which
\begin{eqnarray}
	\tilde{Q}_1(z) &=& R^{-1}(z)Q_1(z) R(z),  \label{Q1Relation21} \\
	\tilde{Q}_2(z) &=& \frac{1}{\pi}\int_0^\pi Q_2 dt, \label{Q1Relation22}
\end{eqnarray}
with $\tilde{Q}_1 \in S_{-}^J$ and $\tilde{Q}_2 \in S_{+}^J$.
Notice that $R(0) = \mathbb{I}= R(\pi)$, thus the above transformation preserves boundary conditions. If we consider the equation
\begin{equation}
	J\tilde{Y_a}' + \tilde{Q_1}\tilde{Y_a} =  \left(\lambda - \frac{1}{2\pi}\int_0^\pi (q_1 + q_2) dt\right)\tilde{Y_a},
\end{equation}
then
\begin{equation}\label{Ytransf43}
	\mathbb{Y}(\lambda,z) = R(z)e^{-i\int_0^z\Im q dt}\tilde\mathbb{Y}_a\left(\lambda - \frac{1}{2\pi}\int_0^z (q_1 + q_2) dt,z\right).
\end{equation}
 Setting $x = \pi$ in equation (\ref{Ytransf43}) and taking the trace we have
\begin{equation}\label{discrimrelation1}
	\Delta(\lambda) = e^{-i\int_0^\pi \Im q dt}\tilde\Delta_a\left(\lambda - \frac{1}{2\pi}\int_0^\pi (q_1 + q_2) dt\right).
\end{equation}
Equation (\ref{discrimrelation1}) shows that $\Delta$ maps $\lambda \in \R$ into the line $\{ \eta e^{-i\int_0^\pi \Im q dt} : \eta \in \R \}$.

We say that $Q$ is {\bf $\frac{\pi}{2}$-$\sigma_2$-similar} if $Q(x + \frac{\pi}{2}) = \sigma_2 Q(x) \sigma_2$, this is equivalent
 to $Q_1$ being $\frac{\pi}{2}$-anti-periodic and $Q_2$ being $\frac{\pi}{2}$-periodic.

\begin{thm}\label{perthm}
	Suppose $Q$ in $\ell Y = \lambda Y$ is a Hermitian $2 \times 2$ complex 
	$\pi$-periodic matrix function integrable on $[0, \pi)$, 
	 then the following hold: \\
	{\bf (a)} If $Q$ is a.e. $\frac{\pi}{2}$-periodic then $\Delta + 2e^{-i\int_0^\pi \Im q dt}$ has only double zeros. \\
	{\bf (b)} If $\Delta + 2e^{-i\int_0^\pi \Im q dt}$ has only double zeros then $\tilde{Q}$ is a.e. $\frac{\pi}{2}$-periodic,
	where $\tilde{Q}$ is as given in (\ref{QEqn1})-(\ref{Q1Relation22}).
	
\end{thm}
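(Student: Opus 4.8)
The plan is to reduce the theorem, via the gauge transformation $Y = R\tilde Y$ constructed in \eqref{RfunctionDefn}--\eqref{Q1Relation22}, to the already-established canonical-form Lemma \ref{CanonSystI}. The key identity is \eqref{discrimrelation1}, together with \eqref{Ytransf43}: modulo the scalar factors $e^{-i\int_0^\pi \Im q\,dt}$ and the shift $\lambda\mapsto\lambda-\frac1{2\pi}\int_0^\pi(q_1+q_2)\,dt$, the discriminant $\Delta$ of the system with potential $Q$ coincides with the $\mathbb I$-discriminant $\tilde\Delta_a$ of the \emph{canonical} system with potential $\tilde Q_1$. Since the shift and the nonzero scalar multiple are entire invertible operations, $\Delta+2e^{-i\int_0^\pi\Im q\,dt}$ has only double zeros if and only if $\tilde\Delta_a+2$ does. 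So both (a) and (b) become statements about $\tilde Q_1$ and the canonical discriminant $\tilde\Delta_a$, to which Lemma \ref{CanonSystI} (equivalently: $\tilde\Delta_a+2$ has only double zeros $\iff\tilde{\mathbb Y}_a(\pi)=\tilde{\mathbb Y}_a(\pi/2)^2$) applies.

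For part \textbf{(a)}, I would assume $Q$ is a.e.\ $\tfrac\pi2$-periodic. First I note this makes $Q_2$ a.e.\ $\tfrac\pi2$-periodic, hence $\tilde Q_2=\frac1\pi\int_0^\pi Q_2\,dt$ is constant and $R(z)=e^{J\int_0^z(Q_2-\tilde Q_2)dt}$ is itself $\tfrac\pi2$-periodic (its exponent is a $\tfrac\pi2$-periodic function vanishing at $0$); and $Q_1$ is a.e.\ $\tfrac\pi2$-periodic, so $\tilde Q_1(z)=R^{-1}(z)Q_1(z)R(z)$ is a.e.\ $\tfrac\pi2$-periodic. Then the canonical fundamental solution satisfies $\tilde{\mathbb Y}_a(z+\tfrac\pi2)=\tilde{\mathbb Y}_a(z)\tilde{\mathbb Y}_a(\tfrac\pi2)$, so in particular $\tilde{\mathbb Y}_a(\pi)=\tilde{\mathbb Y}_a(\tfrac\pi2)^2$; by Lemma \ref{CanonSystI}, $\tilde\Delta_a+2$ has only double zeros, and transporting back through \eqref{discrimrelation1} gives the claim. (One should also absorb the scalar shift $\lambda\mapsto\lambda-\frac1{2\pi}\int_0^\pi(q_1+q_2)dt$ into the discriminant of the intermediate equation for $\tilde Y_a$; this is the content of \eqref{Ytransf43} evaluated at $z=\pi$.)

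For part \textbf{(b)}, assume $\Delta+2e^{-i\int_0^\pi\Im q\,dt}$ has only double zeros. By the reduction above, $\tilde\Delta_a+2$ has only double zeros, so Lemma \ref{CanonSystI} gives $\tilde{\mathbb Y}_a(\pi)=\tilde{\mathbb Y}_a(\tfrac\pi2)^2$. I then want to conclude that the \emph{canonical} potential $\tilde Q_1$ is a.e.\ $\tfrac\pi2$-periodic, and from this that $\tilde Q=\tilde Q_1+\tilde Q_2$ is a.e.\ $\tfrac\pi2$-periodic (the $\tilde Q_2$ part being automatic since $\tilde Q_2$ is constant). The bridge from $\tilde{\mathbb Y}_a(\pi)=\tilde{\mathbb Y}_a(\tfrac\pi2)^2$ back to periodicity of the potential is the analogue, for canonical systems, of Lemma \ref{YeqYTLem}: consider the shifted potential $\tilde Q_1(\cdot+\tfrac\pi2)$ and its fundamental solution; one checks its monodromy at $\pi$ agrees with that of $\tilde Q_1$ for all $\lambda$, hence by uniqueness (Lemma \ref{YeqYTLem}) the two fundamental solutions agree on $\R$, which forces $\tilde Q_1(z+\tfrac\pi2)=\tilde Q_1(z)$ a.e. Finally, since $R$ is $\tfrac\pi2$-periodic exactly when $\tilde Q_1$ is (and the statement of the theorem is phrased in terms of $\tilde Q$ rather than $Q$, sidestepping the gauge ambiguity noted in the introduction), this completes (b).

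The \textbf{main obstacle} is the last bridge in (b): deducing $\tfrac\pi2$-periodicity of the potential from the monodromy relation $\tilde{\mathbb Y}_a(\pi)=\tilde{\mathbb Y}_a(\tfrac\pi2)^2$. This is not purely formal because a square root of the monodromy matrix need not be a monodromy matrix of the same system over a half-period; one must genuinely use the structure of the canonical ($S_-^J$-valued, real) problem and an entire-function/uniqueness argument in the spirit of Lemma \ref{YeqYTLem} to identify $\tilde{\mathbb Y}_a(\tfrac\pi2)$ as the correct half-period transfer matrix. Care is also needed to track the scalar shift and the $e^{-i\int_0^\pi\Im q\,dt}$ factor through the square-root/Hadamard-product argument so that "double zeros of $\Delta+2e^{-i\int_0^\pi\Im q\,dt}$" really does translate to "double zeros of $\tilde\Delta_a+2$" and not to some rotated variant.
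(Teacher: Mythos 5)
Your proposal is correct and takes essentially the same route as the paper: gauge-reduce to the canonical system via $R$ and (\ref{discrimrelation1}), apply Lemma \ref{CanonSystI} in both directions, and close part (b) with the uniqueness Lemma \ref{YeqYTLem}. The only immaterial difference is the comparison potential in (b): the paper uses the $\frac{\pi}{2}$-periodization $\tilde{Q}_b(x)=\tilde{Q}_1(x \bmod \frac{\pi}{2})$ rather than your shift $\tilde{Q}_1(\cdot+\frac{\pi}{2})$, but both have the same monodromy at $\pi$ as $\tilde{Q}_1$ once $\tilde{\mathbb{Y}}_a(\pi)=\tilde{\mathbb{Y}}_a(\frac{\pi}{2})^2$ is known, so Lemma \ref{YeqYTLem} yields the a.e.\ $\frac{\pi}{2}$-periodicity either way.
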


\begin{thm}\label{antiperthm}
	Suppose  $Q$ in $\ell Y = \lambda Y$ is a Hermitian $2 \times 2$ complex $\pi$-periodic 
	matrix function integrable on $[0, \pi)$,  then the following hold: \\	
	{\bf (a)} If $Q_1$ is a.e. $\frac{\pi}{2}$-anti-periodic and $Q_2$ is a.e. $\frac{\pi}{2}$-periodic
	 then $\Delta - 2e^{-i\int_0^\pi \Im q dt}$ has only double zeros. \\
	{\bf (b)} If $\Delta - 2e^{-i\int_0^\pi \Im q dt}$ has only double zeros then $\tilde{Q}_1$ is a.e.
	 $\frac{\pi}{2}$-anti-periodic and $\tilde{Q}_2$ is a.e. $\frac{\pi}{2}$-periodic, where $\tilde{Q}_1$ and $\tilde{Q}_2$
	 are as given in (\ref{QEqn1})-(\ref{Q1Relation22})
\end{thm}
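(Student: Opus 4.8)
The plan is to reduce both parts to the canonical case treated in Lemma~\ref{CanonSystI2}, using the gauge transformation $R$ of (\ref{RfunctionDefn})--(\ref{Q1Relation22}) together with a $\sigma_2$--conjugation device. The device is the remark that, since $\sigma_2=iJ$ commutes with $J$ while anticommuting with every $P\in S_-^J$, the map $Y\mapsto\sigma_2 Y$ sends solutions of $JY'+PY=\mu Y$ to solutions of $JY'-PY=\mu Y$. Hence, if a canonical potential $P$ is a.e. $\frac{\pi}{2}$--anti-periodic with fundamental solution $\Psi$ (so $\Psi(0)=\mathbb{I}$), then $x\mapsto\Psi(x+\frac{\pi}{2})$ and $x\mapsto\sigma_2\Psi(x)$ both solve $JZ'-P(x)Z=\mu Z$, which forces $\Psi(x+\frac{\pi}{2})=\sigma_2\Psi(x)\sigma_2\Psi(\frac{\pi}{2})$ and, at $x=\frac{\pi}{2}$, the monodromy identity $\Psi(\pi)=(\sigma_2\Psi(\frac{\pi}{2}))^2$ for every $\mu$. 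I will also use that $R(0)=R(\pi)=\mathbb{I}$, so $\tilde Q_1$ inherits $\pi$--periodicity, and that (\ref{discrimrelation1}) rewrites as $\Delta-2e^{-i\int_0^\pi\Im q\,dt}=e^{-i\int_0^\pi\Im q\,dt}\bigl(\tilde\Delta_a(\,\cdot-c\,)-2\bigr)$ with $c=\frac{1}{2\pi}\int_0^\pi(q_1+q_2)\,dt$; since $\lambda\mapsto\lambda-c$ is a bijection of $\C$, ``$\Delta-2e^{-i\int_0^\pi\Im q\,dt}$ has only double zeros'' is equivalent to ``$\tilde\Delta_a-2$ has only double zeros'', where $\tilde\Delta_a$ is the $\mathbb{I}$--discriminant of the canonical equation $J\tilde Y_a'+\tilde Q_1\tilde Y_a=\mu\tilde Y_a$.

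For part~(a), assume $Q_1$ a.e. $\frac{\pi}{2}$--anti-periodic and $Q_2$ a.e. $\frac{\pi}{2}$--periodic. Then $\int_0^{\pi/2}\bigl(Q_2-\frac1\pi\int_0^\pi Q_2\,d\tau\bigr)dt=0$, so $R$ is $\frac{\pi}{2}$--periodic; with $\tilde Q_1=R^{-1}Q_1R$ and the anti-periodicity of $Q_1$, this makes $\tilde Q_1$ a.e. $\frac{\pi}{2}$--anti-periodic. The device above then gives $\tilde{\mathbb{Y}}_a(\pi)=(\sigma_2\tilde{\mathbb{Y}}_a(\frac{\pi}{2}))^2$ for all $\mu$, so Lemma~\ref{CanonSystI2} shows $\tilde\Delta_a-2$, hence $\Delta-2e^{-i\int_0^\pi\Im q\,dt}$, has only double zeros.

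For part~(b), assume $\Delta-2e^{-i\int_0^\pi\Im q\,dt}$ has only double zeros; then $\tilde\Delta_a-2$ does, and Lemma~\ref{CanonSystI2} gives $\tilde{\mathbb{Y}}_a(\pi)=(\sigma_2\tilde{\mathbb{Y}}_a(\frac{\pi}{2}))^2$ for all $\mu$. To promote this to $\frac{\pi}{2}$--anti-periodicity of $\tilde Q_1$, I would introduce the canonical $\pi$--periodic potential $P^*(x):=-\tilde Q_1(x+\frac{\pi}{2})$ with fundamental solution $\mathbb{Y}^*$, observe via the device that $\mathbb{Y}^*(x)=\sigma_2\tilde{\mathbb{Y}}_a(x+\frac{\pi}{2})\,\tilde{\mathbb{Y}}_a(\frac{\pi}{2})^{-1}\sigma_2$, and then compute $\mathbb{Y}^*(\pi)$ using the cocycle relation $\tilde{\mathbb{Y}}_a(\frac{3\pi}{2})=\tilde{\mathbb{Y}}_a(\frac{\pi}{2})\tilde{\mathbb{Y}}_a(\pi)$, the identity $\tilde{\mathbb{Y}}_a(\pi)=(\sigma_2\tilde{\mathbb{Y}}_a(\frac{\pi}{2}))^2$, and $\sigma_2^2=\mathbb{I}$, to find $\mathbb{Y}^*(\pi)=\tilde{\mathbb{Y}}_a(\pi)$ for every $\mu$. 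Lemma~\ref{YeqYTLem} then forces $\mathbb{Y}^*\equiv\tilde{\mathbb{Y}}_a$, and comparing the two canonical differential equations satisfied by this common fundamental solution yields $P^*=\tilde Q_1$ a.e., i.e. $\tilde Q_1$ is a.e. $\frac{\pi}{2}$--anti-periodic; finally $\tilde Q_2=\frac1\pi\int_0^\pi Q_2\,dt$ is constant, hence trivially $\frac{\pi}{2}$--periodic.

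The step I expect to be the main obstacle is this last promotion in part~(b): the identity $\tilde{\mathbb{Y}}_a(\pi)=(\sigma_2\tilde{\mathbb{Y}}_a(\frac{\pi}{2}))^2$ is information only at the endpoints of a period, and converting it into genuine $\frac{\pi}{2}$--anti-periodicity of the potential is what makes the uniqueness Lemma~\ref{YeqYTLem} indispensable; closing the argument requires exploiting the precise squared shape of the monodromy identity (and $\sigma_2^2=\mathbb{I}$) to verify that the shifted-and-negated potential $P^*$ has exactly the same monodromy matrix as $\tilde Q_1$. A minor but essential subtlety is that the whole argument necessarily goes through the gauge-equivalent $\tilde Q$ rather than $Q$ itself, so part~(b) cannot assert $\frac{\pi}{2}$--anti-periodicity of $Q_1$.
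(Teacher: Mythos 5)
Your proposal is correct, and its overall architecture is the paper's: reduce to the canonical potential $\tilde{Q}_1$ via the gauge transformation (\ref{RfunctionDefn}), use the $\sigma_2$-conjugation device to produce the monodromy identity $\tilde{\mathbb{Y}}_a(\pi)=(\sigma_2\tilde{\mathbb{Y}}_a(\frac{\pi}{2}))^2$, invoke Lemma \ref{CanonSystI2} to pass between that identity and the double zeros of $\tilde\Delta_a-2$, and invoke Lemma \ref{YeqYTLem} to upgrade equality of monodromy matrices to equality of potentials. In part (a) the only difference is cosmetic: you apply the device after gauging (having checked that $R$ is $\frac{\pi}{2}$-periodic so that $\tilde{Q}_1$ inherits anti-periodicity), whereas the paper applies it to $Q$ itself and then transfers through $R(\frac{\pi}{2})=R(\pi)=\mathbb{I}$. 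In part (b) your comparison potential is genuinely different: the paper constructs $\tilde{Q}_b$ as the $\frac{\pi}{2}$-anti-periodic extension of $\tilde{Q}_1|_{[0,\pi/2)}$, reruns part (a) for $\tilde{Q}_b$, and matches monodromies via $\tilde{\mathbb{Y}}_b(\frac{\pi}{2})=\tilde{\mathbb{Y}}_a(\frac{\pi}{2})$; you instead take $P^*(x)=-\tilde{Q}_1(x+\frac{\pi}{2})$, identify its normalized fundamental solution as $\sigma_2\tilde{\mathbb{Y}}_a(x+\frac{\pi}{2})\tilde{\mathbb{Y}}_a(\frac{\pi}{2})^{-1}\sigma_2$, and check $\mathbb{Y}^*(\pi)=\tilde{\mathbb{Y}}_a(\pi)$ from the cocycle relation, the squared monodromy identity and $\sigma_2^2=\mathbb{I}$ (the computation is right). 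Your variant is arguably tidier: it avoids having to define the sign-alternating extension (which the paper's formula $\tilde{Q}_1(x \bmod \frac{\pi}{2})$ only gestures at), and the conclusion $P^*=\tilde{Q}_1$ a.e. is literally the definition of $\frac{\pi}{2}$-anti-periodicity; the paper's variant has the merit of reusing part (a) verbatim for the auxiliary problem. Your closing remarks — that Lemma \ref{YeqYTLem} is the indispensable step converting endpoint data into pointwise information, and that the conclusion can only concern $\tilde{Q}$ rather than $Q$ — are exactly where the paper places the weight.
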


If $Q$ is in a canonical form then $R = \mathbb{I}$ so that $Q = Q_1 = \tilde{Q}_1$. This leads to the following 
Corollary to Theorems \ref{perthm} and \ref{antiperthm}.

\begin{cor} \label{BorgUnique}
	If $Q$ in $\ell Y = \lambda Y$ is a $2 \times 2$ canonical $\pi$-periodic 
	matrix function integrable on $[0, \pi)$ then the following hold:\\
	{\it (a)} $Q$ is a.e. $\frac{\pi}{2}$-periodic if and only if $\Delta + 2$ has only double zeros.\\
	{\it (b)} $Q$ is a.e. $\frac{\pi}{2}$-anti-periodic if and only if $\Delta - 2$ has only double zeros.\\
\end{cor}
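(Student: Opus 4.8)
The plan is to deduce the Corollary directly from Theorems \ref{perthm} and \ref{antiperthm}, the point being that the canonical hypothesis collapses all of the auxiliary data appearing in those theorems. First I would record the consequences of $Q = Q_1$, i.e.\ $Q_2 = 0$: by the $J$-decomposition (\ref{Q1andQ2Defn}) this forces $q_1 + q_2 = 0$ and $\Im q \equiv 0$. Hence $\int_0^\pi Q_2\, d\tau = 0$, so $R \equiv \mathbb{I}$ by (\ref{RfunctionDefn}); the transformed potential of (\ref{QEqn1})--(\ref{Q1Relation22}) then satisfies $\tilde{Q}_1 = R^{-1}Q_1 R = Q_1 = Q$ and $\tilde{Q}_2 = \frac{1}{\pi}\int_0^\pi Q_2\, dt = 0$, so $\tilde{Q} = Q$. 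Moreover $\int_0^\pi \Im q\, dt = 0$, whence $e^{-i\int_0^\pi \Im q\, dt} = 1$, and the functions $\Delta + 2e^{-i\int_0^\pi \Im q dt}$ and $\Delta - 2e^{-i\int_0^\pi \Im q dt}$ of Theorems \ref{perthm} and \ref{antiperthm} are just $\Delta + 2$ and $\Delta - 2$.

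For part (a): if $Q$ is a.e.\ $\frac{\pi}{2}$-periodic, Theorem \ref{perthm}(a) gives that $\Delta + 2e^{-i\int_0^\pi \Im q dt} = \Delta + 2$ has only double zeros; conversely, if $\Delta + 2$ has only double zeros then the same function $\Delta + 2e^{-i\int_0^\pi \Im q dt}$ has only double zeros, so Theorem \ref{perthm}(b) gives that $\tilde{Q} = Q$ is a.e.\ $\frac{\pi}{2}$-periodic.

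For part (b): here I would first observe that a canonical $Q$ has $Q_2 = 0$, which is trivially $\frac{\pi}{2}$-periodic, and $Q_1 = Q$, so the hypothesis of Theorem \ref{antiperthm}(a), namely ``$Q_1$ a.e.\ $\frac{\pi}{2}$-anti-periodic and $Q_2$ a.e.\ $\frac{\pi}{2}$-periodic'', is equivalent to ``$Q$ a.e.\ $\frac{\pi}{2}$-anti-periodic''. Then if $Q$ is a.e.\ $\frac{\pi}{2}$-anti-periodic, Theorem \ref{antiperthm}(a) shows $\Delta - 2e^{-i\int_0^\pi \Im q dt} = \Delta - 2$ has only double zeros; conversely, if $\Delta - 2$ has only double zeros, Theorem \ref{antiperthm}(b) gives that $\tilde{Q}_1 = Q$ is a.e.\ $\frac{\pi}{2}$-anti-periodic, the conclusion on $\tilde{Q}_2$ being automatic since $\tilde{Q}_2 = 0$ is $\frac{\pi}{2}$-periodic.

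I do not expect a genuine obstacle: the Corollary is a specialization rather than a new argument. The only steps needing care are the bookkeeping identities $R \equiv \mathbb{I}$, $\tilde{Q} = Q$ and $\int_0^\pi \Im q\, dt = 0$ in the canonical case, and the remark that for canonical $Q$ the clauses about $Q_2$ (resp.\ $\tilde{Q}_2$) being $\frac{\pi}{2}$-periodic in Theorems \ref{perthm} and \ref{antiperthm} are vacuous; with these in hand the equivalences follow at once.
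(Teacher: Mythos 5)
Your proposal is correct and is essentially the paper's own argument: the authors derive the Corollary from Theorems \ref{perthm} and \ref{antiperthm} by the single observation that in the canonical case $R=\mathbb{I}$ and $Q=Q_1=\tilde{Q}_1$ (with $\tilde{Q}_2=0$ and $\Im q\equiv 0$, so the exponential factors reduce to $1$). Your write-up merely makes explicit the same bookkeeping the paper leaves implicit.
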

\begin{cor} \label{Ambarzumyan} (Ambarzumyan)
	If $Q$ in $\ell Y = \lambda Y$ is a Hermitian $2 \times 2$ complex $\pi$-periodic 
	matrix function integrable on $[0, \pi)$, then 
	every instability interval vanishes if and only if $Q = r\sigma_0 + q\sigma_2$  a.e., where $r$ and $q$ are
	 real and integrable on $[0,\pi)$.
\end{cor}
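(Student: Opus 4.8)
The plan is to deduce Corollary \ref{Ambarzumyan} from Theorems \ref{perthm} and \ref{antiperthm}, once the phrase ``every instability interval vanishes'' has been translated into a statement about the zeros of $\Delta \pm 2e^{-i\int_0^\pi \Im q\,dt}$. Writing $\alpha := \int_0^\pi \Im q\,dt$ and $c := \frac{1}{2\pi}\int_0^\pi(q_1+q_2)\,dt$, equation (\ref{discrimrelation1}) gives $\Delta(\lambda) = e^{-i\alpha}\,\tilde\Delta_a(\lambda - c)$ with $\tilde\Delta_a$ the discriminant of a \emph{real} canonical system, hence real-valued on $\R$; so the regions of instability are the bounded components of $\{\lambda\in\R : |\tilde\Delta_a(\lambda-c)|>2\}$ and $|\Delta| = |\tilde\Delta_a(\cdot - c)|$ on $\R$. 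First I would record the chain: every instability interval vanishes if and only if $\{\lambda\in\R : |\tilde\Delta_a(\lambda-c)|>2\}=\emptyset$, if and only if every zero of $\tilde\Delta_a - 2$ and of $\tilde\Delta_a + 2$ is a double zero, if and only if (using $|e^{-i\alpha}|=1$) both $\Delta + 2e^{-i\alpha}$ and $\Delta - 2e^{-i\alpha}$ have only double zeros. The middle equivalence is the one genuinely delicate point, and it is exactly the content of the remark — made after the definition of the regions of stability — that these are ``suitable definitions'': a real entire function of order $1$ with $\tilde\Delta_a(\mu) = 2\cos\mu\pi + o(e^{|\Im\mu|\pi})$ stays in $[-2,2]$ on all of $\R$ precisely when it never crosses $\pm 2$, i.e.\ when every zero of $\tilde\Delta_a^2 - 4$ is of even order, which — the relevant eigenspaces of $L_1$ and $L_2$ being at most two-dimensional — forces order exactly two. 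I expect this reformulation to be the main obstacle; the rest is short.

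For necessity I would argue as follows. Assume every instability interval vanishes, so by the above $\Delta + 2e^{-i\alpha}$ and $\Delta - 2e^{-i\alpha}$ both have only double zeros. Theorem \ref{perthm}(b) then gives that $\tilde Q = \tilde Q_1 + \tilde Q_2$ of (\ref{QEqn1})--(\ref{Q1Relation22}) is a.e.\ $\frac{\pi}{2}$-periodic; since the projection $M_2(\C)\to S_-^J$ is linear and commutes with translation, its $S_-^J$-part $\tilde Q_1$ is a.e.\ $\frac{\pi}{2}$-periodic. Theorem \ref{antiperthm}(b) gives that $\tilde Q_1$ is a.e.\ $\frac{\pi}{2}$-anti-periodic. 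Hence $\tilde Q_1(x+\frac{\pi}{2}) = \tilde Q_1(x) = -\tilde Q_1(x)$ for a.e.\ $x$, so $\tilde Q_1 = 0$ a.e. Since $\tilde Q_1(z) = R^{-1}(z)Q_1(z)R(z)$ with $R$ of (\ref{RfunctionDefn}) invertible, $Q_1 = 0$ a.e., and then (\ref{Q1andQ2Defn}) gives $Q = Q_2 = \frac12(q_1+q_2)\sigma_0 + \Im(q)\sigma_2$ a.e., i.e.\ $Q = r\sigma_0 + q\sigma_2$ a.e.\ with $r,q$ real and integrable on $[0,\pi)$.

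For sufficiency I would compute $\Delta$ outright. Suppose $Q = r\sigma_0 + q\sigma_2$ a.e.\ with $r,q$ real; then $Q_1 = 0$ and $Q = Q_2$. Since $\sigma_2 = iJ$, the coefficient matrix of $\ell Y = \lambda Y$, namely $A(z) := -J(\lambda\mathbb{I} - Q_2(z)) = -i\,\Im q(z)\,\mathbb{I} + \left(\frac12(q_1+q_2)(z) - \lambda\right)J$, lies in the commutative algebra spanned by $\mathbb{I}$ and $J$ for every $z$ (this is the point, noted after (\ref{Q1andQ2Defn}), that $JQ_2$ and $\int_0^z JQ_2$ commute), so $\mathbb{Y}(z) = \exp\!\left(\int_0^z A(t)\,dt\right)$; evaluating at $z=\pi$, using $J^2 = -\mathbb{I}$ and $\trace J = 0$, gives $\Delta(\lambda) = 2e^{-i\alpha}\cos\!\left((\lambda - c)\pi\right)$. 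Therefore $|\Delta(\lambda)| \le 2$ for every $\lambda\in\R$, so $\mathcal{S} = \R$ and no instability interval survives; equivalently $\Delta \pm 2e^{-i\alpha} = 2e^{-i\alpha}\left(\cos((\lambda-c)\pi)\pm 1\right)$ has only double zeros. (Alternatively, staying within the machinery above: $\tilde Q_1 = R^{-1}Q_1 R = 0$ is simultaneously $\frac{\pi}{2}$-periodic and $\frac{\pi}{2}$-anti-periodic while $\tilde Q_2$ is constant, so one may instead apply Theorems \ref{perthm}(a) and \ref{antiperthm}(a) to the Hermitian $\pi$-periodic system with potential $\tilde Q$, using $\Delta = \tilde\Delta$ and $\int_0^\pi \Im\tilde q\,dt = \alpha$ since $R(\pi) = \mathbb{I}$.) This proves the corollary. \qed
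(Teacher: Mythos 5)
Your proposal is correct and follows essentially the same route as the paper: necessity via Theorems \ref{perthm}(b) and \ref{antiperthm}(b) forcing $\tilde{Q}_1$ to be simultaneously $\frac{\pi}{2}$-periodic and $\frac{\pi}{2}$-anti-periodic (hence zero, hence $Q_1=0$ via (\ref{Q1Relation21})), and sufficiency by the explicit exponential formula $\Delta = 2e^{-i\alpha}\cos((\lambda-c)\pi)$, which is exactly the paper's computation. The only difference is that you spell out the equivalence between ``every instability interval vanishes'' and ``all zeros of $\Delta\pm 2e^{-i\alpha}$ are double,'' a step the paper asserts without argument; your justification of it is sound.
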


The following example shows that the converse of {\bf (a)} in Theorem \ref{perthm} is not possible in general.
\begin{example}
	Suppose $Q = Q_1$ is a.e. $\frac{\pi}{2}$-periodic and consider the transformation $Y = \hat{R}\hat{Y}$, where
	\begin{equation}
		\hat{R}(z) = e^{J\int_0^z (-2t + \pi)(\mathbb{I} + \sigma_2)dt}.
	\end{equation} 
\end{example}
From Theorem \ref{perthm} {\bf (a)} we have that the zeros of $\Delta + 2$ are all double. Furthermore $-2x + \pi$ has mean value
 zero on $[0,\pi]$, thus $\hat{R}(0) = \hat{R}(\pi) = \mathbb{I}$, so that $\hat{R}$ preserves the boundary conditions. The transformation
 $Y = \hat{R}\hat{Y}$ gives
\begin{equation}
	J\hat{Y}' + (\hat{Q}_1 + \hat{Q}_2)\hat{Y}  = \lambda \hat{Y},
\end{equation}
where 
\begin{eqnarray}
	\hat{Q}_1(z) = e^{2J(z^2 - \pi z )}Q_1(z), \\
	\hat{Q}_2(z) = (-2z + \pi)(\mathbb{I} + \sigma_2).
\end{eqnarray}
Notice that $\hat{Q}_2(\frac{\pi}{4}) = \frac{\pi}{2}(\mathbb{I} + \sigma_2)$ while 
$\hat{Q}_2(\frac{3\pi}{4}) = -\frac{\pi}{2}(\mathbb{I} + \sigma_2)$, thus $\hat{Q}_2$ is not $\frac{\pi}{2}$-periodic even though
zeros of $\hat\Delta + 2$ are all double.

{\bf Proof of Theorem 5.1:} To prove {\bf (a)}, assume $Q$ is $\frac{\pi}{2}$-periodic. The fundamental solutions 
 $\mathbb{Y}(z+ \pi)$ and $\mathbb{Y}(z + \frac{\pi}{2})$ are both solutions of
 $\ell Y = \lambda Y$ thus
\begin{equation}\label{Qp/2per1}
	\mathbb{Y}(z+ \pi) = \mathbb{Y}(z + \textstyle\frac{\pi}{2})B, \label{SolnTrans01}
\end{equation}
for some invertible matrix $B$, which may depend on $\lambda$.
Setting $z = -\frac{\pi}{2}$ in the above equation gives $B = \mathbb{Y}(\frac{\pi}{2})$, thus
\begin{eqnarray} \label{GP11}
	\mathbb{Y}(z + \textstyle\frac{\pi}{2}) = \mathbb{Y}(z)\mathbb{Y}(\textstyle\frac{\pi}{2})
	 \quad
	 \mbox{ and }
	 \quad
	\mathbb{Y}(z+ \pi) = \mathbb{Y}(z + \frac{\pi}{2})\mathbb{Y}(\frac{\pi}{2}).
\end{eqnarray}
Setting $z = 0$ in the second equation of (\ref{GP11}) gives
\begin{equation} \label{PeriodicFMS1}
	\mathbb{Y}(\pi) = \mathbb{Y}(\textstyle\frac{\pi}{2})^2.
\end{equation}
Since $Q_2$ is $\frac{\pi}{2}$-periodic, a direct calculation shows that $R(\pi) = \mathbb{I} = R(\frac{\pi}{2})$, for $R(z)$ defined by (\ref{RfunctionDefn}). Thus (\ref{Ytransf43}) and (\ref{PeriodicFMS1}) give
\begin{equation}\label{periodicFMS43}
	\tilde\mathbb{Y}_a(\pi) = \tilde\mathbb{Y}_a(\textstyle\frac{\pi}{2})^2.
\end{equation}
Furthermore, 
following the method used in (\ref{Ysquared2})-(\ref{traceG}) we obtain 
\begin{equation}\label{discrimeqn453}
(\tilde\Delta_{a+}^\mathbb{I})^2 = \tilde\Delta_a + 2.
\end{equation}
Equation (\ref{discrimeqn453}) shows that $\tilde\Delta_a + 2$ has only zeros of order $2n$, $n\in\N$, 
but the maximal dimension of the eigenspace of $\sigma(L_2)$ is $2$, thus $\tilde\Delta_a + 2$ has only double zeros.
Hence equation (\ref{discrimrelation1}) shows that $e^{i\int_0^\pi \Im q dt}\Delta + 2$ has only double zeros.

For {\bf (b)}, suppose $e^{i\int_0^\pi \Im q dt}\Delta + 2$ has only double zeros, thus $\tilde\Delta_a + 2$
has only double zeros. From Lemma \ref{CanonSystI} we have 
\begin{equation}\label{PeriodicFMS1a}
	\tilde\mathbb{Y}_a(\pi) = \tilde\mathbb{Y}_a(\textstyle\frac{\pi}{2})^2.
\end{equation}
Consider the problem
\begin{equation}
	J\tilde{Y_b}' + \tilde{Q_b}\tilde{Y_b} =  \left(\lambda - \frac{1}{2\pi}\int_0^\pi (q_1 + q_2) dt\right)\tilde{Y_b},
\end{equation}
where $\tilde{Q}_b( x ) := \tilde{Q}_1(x \mbox{ mod } \frac{\pi}{2})$ a.e., where $x \mbox{ mod } \frac{\pi}{2} \in [0, \frac{\pi}{2})$ for all $x \in \mathbb{R}$.
 It follows that $\tilde{Q}_b$ is a.e.
 $\frac{\pi}{2}$-periodic,
 then proceeding as in (\ref{Qp/2per1})-(\ref{periodicFMS43}) we have 
\begin{equation}\label{PeriodicFMS1b}
	\tilde\mathbb{Y}_b(\pi) = \tilde\mathbb{Y}_b(\textstyle\frac{\pi}{2})^2.
\end{equation}
However, by construction $\tilde\mathbb{Y}_b(\frac{\pi}{2}) = \tilde\mathbb{Y}_a(\frac{\pi}{2})$, thus 
(\ref{PeriodicFMS1a}) and (\ref{PeriodicFMS1b}) show that $\tilde\mathbb{Y}_b(\pi) = \tilde\mathbb{Y}_a(\pi)$. Using Lemma
 \ref{YeqYTLem} we have that
 $\tilde\mathbb{Y}_b(\lambda, x) = \tilde\mathbb{Y}_a(\lambda, x)$, for $\lambda \in \C$, $x \in \R$. Thus 
as
\begin{eqnarray}\label{yby1sea}
	\tilde{Q}_b - \tilde{Q}_1 = J(\tilde{\mathbb{Y}}_a' \tilde{\mathbb{Y}}_a^{-1} - \tilde{\mathbb{Y}}_b' \tilde{\mathbb{Y}}_b^{-1}) = 0,
\end{eqnarray}
we have $\tilde{Q}_b = \tilde{Q}_1$, and $\tilde{Q}_1$ is a.e.
$\frac{\pi}{2}$-periodic. Since $\tilde{Q}_2$ is constant, we have that $\tilde{Q}$ is a.e. $\frac{\pi}{2}$-periodic.
 \qed

{\bf Proof of Theorem 5.2:} To prove {\bf (a)}, assume that $Q_1$ is a.e. $\frac{\pi}{2}$-anti-periodic and $Q_2$ is a.e.
 $\frac{\pi}{2}$-periodic, then $\mathbb{Y}(x)$ and $\sigma_2
 \mathbb{Y}(x + \frac{\pi}{2})$ are both solutions of $\ell Y = \lambda Y$, thus they are related by a
 transformation matrix $B$ as
\begin{equation}\label{thm5.2a}
	\sigma_2\mathbb{Y}(z + \textstyle\frac{\pi}{2}) = \mathbb{Y}(z)B.
\end{equation}
Setting $z = 0$ in the above equation gives $B = \sigma_2 \mathbb{Y}(\frac{\pi}{2})$, thus
\begin{equation}
	\mathbb{Y}(z + \textstyle\frac{\pi}{2}) = \sigma_2\mathbb{Y}(z)\sigma_2\mathbb{Y}(\textstyle\frac{\pi}{2}).
\end{equation}
At $z = \frac{\pi}{2}$ we have 
\begin{equation}\label{Sigma2DefEqn}
	\mathbb{Y}(\pi) = (\sigma_2\mathbb{Y}(\textstyle\frac{\pi}{2}))^2. \label{FSS2}
\end{equation}
Since $Q_2$ is $\frac{\pi}{2}$-periodic we have that $R(\pi) = \mathbb{I} = R(\frac{\pi}{2})$ for $R(z)$ defined by (\ref{RfunctionDefn}). Thus (\ref{Ytransf43}) and (\ref{PeriodicFMS1}) give
\begin{equation}\label{periodicFMS432}
	\tilde\mathbb{Y}_a(\pi) = \tilde\mathbb{Y}_a(\textstyle\frac{\pi}{2})^2.
\end{equation}
Following the method used in (\ref{SigmaY2eqn})-(\ref{Deltamin2doubleZ}), we have 
\begin{equation}
	(\tilde\Delta_{+a}^J)^2 = 2 -\tilde\Delta_a^\mathbb{I}.
\end{equation} 
 The above equation shows that $\tilde\Delta_a - 2$ has only zeros of order $2n$, $n\in\N$, but the maximal dimension of the 
eigenspace of $\sigma(L_1)$ is $2$, thus $\tilde\Delta_a - 2$ has only double zeros. Combining this with (\ref{discrimrelation1})
proves {\bf (a)}.

For {\bf (b)}, suppose $e^{i\int_0^\pi \Im q dt}\Delta - 2$ has only double zeros, thus $\tilde\Delta_a - 2$
has only double zeros. From Lemma \ref{CanonSystI2} we have 
\begin{equation}\label{PeriodicFMS2a}
	\tilde\mathbb{Y}_a(\pi) = (\sigma_2\tilde\mathbb{Y}_a(\textstyle\frac{\pi}{2}))^2.
\end{equation}
Consider the problem
\begin{equation}
	J\tilde{Y_b}' + \tilde{Q_b}\tilde{Y_b} =  \left(\lambda - \frac{1}{2\pi}\int_0^\pi (q_1 + q_2) dt\right)\tilde{Y_b},
\end{equation}
where $\tilde{Q}_b( x ) := \tilde{Q}_1(x \mbox{ mod } \frac{\pi}{2})$ a.e., where $x \mbox{ mod } \frac{\pi}{2} \in [0, \frac{\pi}{2})$ for all $x \in \mathbb{R}$ to be $\frac{\pi}{2}$-anti-periodic,
 then following (\ref{thm5.2a})-(\ref{periodicFMS432}) we have 
\begin{equation}\label{PeriodicFMS2b}
	\tilde\mathbb{Y}_b(\pi) = (\sigma_2\tilde\mathbb{Y}_b(\textstyle\frac{\pi}{2}))^2.
\end{equation}
However, by construction $\tilde\mathbb{Y}_b(\frac{\pi}{2}) = \tilde\mathbb{Y}_a(\frac{\pi}{2})$, thus 
(\ref{PeriodicFMS2a}) and (\ref{PeriodicFMS2b}) show that $\tilde\mathbb{Y}_b(\pi) = \tilde\mathbb{Y}_a(\pi)$. Using Lemma
 \ref{YeqYTLem} we have that
 $\tilde\mathbb{Y}_b(\lambda, x) = \tilde\mathbb{Y}_a(\lambda, x)$, for $\lambda \in \C$, $x \in \R$. Thus 
as in the proof of Theorem \ref{perthm} equation (\ref{yby1sea}) gives $\tilde{Q}_b = \tilde{Q}_1$, and $\tilde{Q}_1$ is a.e.
$\frac{\pi}{2}$-anti-periodic. Since $\tilde{Q}_2$ is constant, we have that $\tilde{Q}_2$ is a.e. $\frac{\pi}{2}$-periodic.
 \qed

{\bf Proof of Corollary \ref{Ambarzumyan}:} 
Assuming that $Q = r\sigma_0 + q\sigma_2$ a.e., we may rewrite equation (\ref{DifferentialExpr}) as $Y' = (pJ 
 -iq\mathbb{I}   - \lambda J)Y$, thus
$\mathbb{Y}(x) = e^{ J\int_0^z p dt -i \mathbb{I}\int_0^z q dt - J\lambda z}$, so that
\begin{equation}
	\Delta = 2\cos\left(\lambda \pi - \int_0^\pi p dt\right)e^{-i \mathbb{I}\int_0^\pi q dt}.
\end{equation}
The above equation shows that $|\Delta|\leq 2$, thus every instability interval vanishes. 

For necessity, assume that every instability interval vanishes, thus for any fixed $e^{i\int_0^\pi \Im q dt}$ 
all zeros of $e^{i\int_0^\pi \Im q dt}\Delta + 2$ and $e^{i\int_0^\pi \Im q dt}\Delta -2$ are double
 zeros. Thus every zero of $\tilde\Delta_a + 2$ and $\tilde\Delta_a -2$ is a double zero.
Applying Theorems \ref{perthm} and \ref{antiperthm} we have that $\tilde{Q}_1$ is both a.e. $\frac{\pi}{2}$-periodic and a.e.
 $\frac{\pi}{2}$-anti-periodic,
 thus $\tilde{Q}_1 = 0$ a.e.. So that $\tilde{Q} = \tilde{Q}_2$ a.e.. Thus equation (\ref{Q1Relation21}) shows that
$Q_1 = 0$ a.e. and $Q = r\sigma_0 + q\sigma_2$ a.e.. \qed




\begin{thebibliography}{22}

\bibitem{ABar}
{\sc V.~A.~Ambarzumyan,}
{\em \"{U}ber eine Frage der Eigenwerttheorie},
{Z. Phys.} {\bf 53}, 690--695 (1912).	


\bibitem{BG}
{\sc G.~Borg,}
{\em Eine umkehrung der Sturm-Liouvillschen eigenwertaufgabe. bestimmung
der differentialgleichung durch die eigenwerte},
{Acta Math.} {\bf 78}, 1-96 (1946).	


\bibitem{LaS}
{\sc  B.~M.~Levitan, I.~S.~Sargsjan},
{\em Sturm-Liouville and Dirac operators}, {\bf 59},
Kluwer Academic Publishers, (1991).

\bibitem{MlaMM}
{\sc  M.~Lesch, M.~Malamud},
{\em The inverse spectral problem for first order systems on the half line, in Operator Theory, Systems Theory
, and Related Topics: The Moshe Liv\u{s}ic Anniversary Volume}, D. Alpay and V. Vinnikov (eds.), Operatory Theory: Advances
and Applications, {\bf 117}, Birkh\"{a}user, Basel, p. 199--238 (2000).

\bibitem{LS1}
{\sc  A.~L.~Sakhnovich},
{\em Dirac type and canonical systems: spectral and Weyl-Titchmarch functions, direct and inverse problems}, Inverse Problems
{\bf 18}, 331--348 (2002).

\bibitem{LS2}
{\sc  A.~L.~Sakhnovich},
{\em Spectral Theory of Canonical Differential systems. Method of Operator Identities}, Operator Theory: Advances and
 Applications {\bf 107}, Birkh\"{a}user, Basel, (1999).



\bibitem{GKM}
{\sc  F.~Gesztesy, A.~Kiselev, K.~A.~Makarov},
{\em Uniquness results for matrix-valued Schr\"{o}dinger, Jacobi and Dirac-type operators}, 
Math. Machr. {\bf 239-240}, 103-145 (2002).

\bibitem{fGsC}
{\sc S.~Clark, F.~Gesztesy, H.~Holden, B.~M.~Levitan},
{\em Borg-Type Theorems for Matrix-Valued Schr\"{o}dinger Operators}, J. of Diff. Eqns. {\bf 167}, 181--210 (2000).

\bibitem{fGsC2}
{\sc S.~Clark, F.~Gesztesy, W.~Renger},
{\em Trace formulas and Borg-type theorems for matrix-valued Jacobi and Dirac finite difference operators}, J. of Diff. Eqns. {\bf 219}, 144--182 (2005).

\bibitem{fGmZ}
{\sc F.~Gesztesy, M.~Zinchenko},
{\em Borg-Type Theorem associated with orthogonal polynomials on the unit circle}, J. London. Math. Soc. (2) {\bf 74}, 757--777 (2006).

\bibitem{fGsC3}
{\sc S.~Clark, F.~Gesztesy},
{\em Weyl-Titchmarsh $M$-function asymptotics, local uniqueness results, trace formulas, and Borg-type Theorems for Dirac operators}
, J. London. Math. Soc. (2) {\bf 74}, 757--777 (2006).

\bibitem{fGaKkM}
{\sc F.~Gesztesy, A.~Kiselev, K.~A.~Makarov},
{\em Uniqueness results for matrix-valued Schr\"{o}dinger, Jacobi, and Dirac-type operators}
, Math. Nachr. {\bf 239-240}, 103--145 (2002).


\bibitem{mK}
{\sc M.~Kriss},
{\em An $n$-dimensional Ambarzumian type theorem for Dirac Operators}, Inverse Problems {\bf 20}, 1593--1597 (2004).

\bibitem{cYxY}
{\sc C-F.~Yang, X-P~Yang},
{\em Some Ambarzumyan-type theorems for Dirac operators}, Inverse Problems {\bf 23}, 2565--2574 (2007).

\bibitem{fS1}
{\sc F.~Serier},
{\em Inverse spectral problems for singular Ablowitz-Kaup-Newell-Segur operators on $[0,1]$}
, Inverse Problems {\bf 22}, 1457--1484 (2006).

\bibitem{fS2}
{\sc F.~Serier},
{\em Inverse spectral problem for singular AKNS and Schr\"{o}dinger operators on }
, C. R. Acad. Sci. Paris, Ser I {\bf 340},  671--676 (2005).

\bibitem{HS}
{\sc  D.~B.~Hinton, J.~K.~Shaw},
{\em On Titchmarsh-Weyl $M(\lambda)$-functions for linear Hamiltonian systems}, 
J. Diff. Eq. {\bf 40}, 316--342 (1981).

\bibitem{mGtD1}
{\sc M.~G.~Gasymov, T.~T.~Dzabiev},
{\em Solution of the inverse problem by two spectra for the Dirac equation on a finite interval}, 
Akad. Nauk Azerbuidzan. SSR Dokl. {\bf 22}, 3--6 (1966).

\bibitem{mGtD2}
{\sc M.~G.~Gasymov, T.~T.~Dzabiev},
{\em Determination of the system of Dirac differential equations from two spectra}, 
Proc. of the SummerSchool in the Spectral Theory of Operators and the Theory of Group Representations  3--6 (1968).

\bibitem{mGtD3}
{\sc M.~G.~Gasymov, T.~T.~Dzabiev},
{\em The inverse problem for the Dirac system}, 
Dokl. Akad. Nauk SSSR {\bf 167}, 967--970 (1966).

\bibitem{bWat}
{\sc B.~A.~Watson},
{\em Inverse spectral problems for weighted Dirac systems}, 
Inverse Problems {\bf 15},  793--805 (1999).


\bibitem{lA}
{\sc L.~Amour},
{\em Inverse spectral theory for the AKNS system with separated boundary conditions}, 
Inverse Problems {\bf 9},  503--523 (1993).

\bibitem{lA2}
{\sc L.~Amour, J.-C.~Guillot},
{\em Isospectral sets for AKNS systems on the unit interval with generalised periodic boundary conditions}, 
Geom. Funct. Anal. {\bf 6}, 1--27 (1996).

\bibitem{fGKM}
{\sc F.~Gesztesy, A.~Kiselev, K.~A.~Marakov},
{\em Uniqueness results for matrix-valued Schr\"{o}dinger, Jacobi, and Dirac-type operators}, 
Math. Nachr., to appear (2001).




\bibitem{pBmV}
{\sc P.~Boonserm, M.~Visser},
{\em Reformulating the Schr\"{o}dinger equation as a Shabat-Zakharov system}, 
J. Math. Phys.,{\bf 51}, (2010).

\bibitem{mDdA}
{\sc M.~Desaix, D.~Anderson, L.~Helczynski, and M.~Lisak},
{\em Eigenvalues of the Zakharov-Shabat Scattering Problem for Real Symmetric Pulses}, 
Phys. Rev. Lett., {\bf 90}, (2006).

\bibitem{vsGgV}
{\sc V.~S.~Gerdjikov, G.~Vilasi, A.~B.~Yanovski},
 {\em The Inverse Scattering Problem for the Zakharov–Shabat System}, 
{\em Integrable Hamiltonian Hierarchies
Lecture Notes in Physics}, {\bf 748},  97--132 (2008)



\bibitem{nAyK}
{\sc N.~Asano, Y.~Kato}
{\em Algebraic and Spectral Methods for Nonlinear Wave equations},
Longman, New York, (1990).

\bibitem{IC}
{\sc I.~Cherednik}
{\em Basic Methods of Soliton Theory},
World Scientific, Singapore, (1996).

\bibitem{laD}
{\sc L.~A.~Dickey}
{\em Soliton Equations and Hamiltonian systems},
World Scientific, Singapore, (1991).

\bibitem{baD}
{\sc B.~A.~Dubrovin}
{\em Completely integrable Hamiltonian Systems associated with matrix operators and Abelian varieties},
Funct. Anal. Appl. {\bf 11 }, 265--277 (1977).

\bibitem{HH1}
{\sc H.~Hochstadt,}
On the determination of a Hill's equation from its spectrum,
{\em Archive for Rational Mechanics and Analysis}, {\bf 19}, (1965) 353--362.

\bibitem{HH2}
{\sc H.~Hochstadt,}
\newblock On a Hill's Equation with double Eigenvalues,
\newblock {\em Proceedings of the American Math. Soc.}, 65 (1977) 373--374.

\bibitem{HH3}
{\sc H.~Hochstadt,}
\newblock A direct and inverse problem for a Hill’s equation with double eigen-values, 
\newblock {\em J. Math. Anal. Appl.}, {\bf 66}, (1978) 507--513.

\bibitem{hormander}
{\sc L.~H\"ormander,}
{\em Lectures on nonlinear hyperbolic differential equations},
  Math\'ematiques \& Applications 26,
Springer Verlag, (1997).

\bibitem{FaU}
{\sc G.~Freiling, V.~A.~Yurko}
{\em Inverse Sturm-Liouville Problems and Applications},
  Nova Science Publishers, (2001).

\bibitem{CodaLev}
{\sc E.~A.~Coddington, N.~Levinson,}
{\em Theory of ordinary differential equations},
McGraw-Hill Publishing, (1955).

\bibitem{EBS}
{\sc M.~B,~Brown, M.~S.~P~Eastham, K.~M.~Schmidt,}
{\em Periodic Differential Operators},
Birkh\"{a}user Basel, (2013).




\end{thebibliography}
\end{document}